\documentclass[11pt]{amsart}
\input epsf
\usepackage{latexsym}
\usepackage[usenames,dvipsnames]{color}
\usepackage[colorlinks=true,linkcolor=blue,citecolor=BrickRed,urlcolor=RoyalBlue]{hyperref}
%\usepackage[a4paper,bindingoffset=0.2in,%
            %left=1.1in,right=1.4in,top=1.3in,bottom=1.3in,%
            %footskip=.25in]{geometry}
            
  \textwidth=5.5in
\textheight=8in
\oddsidemargin=0.5in
\evensidemargin=0.5in
\topmargin=.5in          
            
%\setlength{\textheight}{7.5in} \setlength{\textwidth}{5.5in}
%\flushbottom
%
%\parindent=0pt\parskip=8pt plus4pt minus4pt
%\documentclass[12pt]{amsart}\Diff
\usepackage{amssymb,amsmath,amsthm,amscd, amssymb,
graphicx, enumerate, verbatim, mathrsfs, color, url, bm, hyperref, parskip}
\usepackage[all]{xy}\usepackage[labelformat=empty]{caption}

%LINE NUMBERS
\usepackage[left]{lineno}
%END LINE NUMBERS

%added by Vitya

\def\Diff{\operatorname{Diff}}

\def\Int{\operatorname{Int}}

%%%%%%%%%%%%%

% Some personalized definitions
\numberwithin{equation}{section}
\newtheorem{cor}[equation]{Corollary}
\newtheorem{lem}[equation]{Lemma}
\newtheorem{prop}[equation]{Proposition}
\newtheorem{thm}[equation]{Theorem}

\newtheorem{quest}[equation]{Question}

\newtheorem{Example}[equation]{Example}

\newtheorem{remark}[equation]{Remark}
\newenvironment{rmk}{\begin{remark}\rm}{\end{remark}}
\def\co{\colon\thinspace}

\newcommand{\Iso}{\mbox{Iso}}

\newcommand{\e}{\varepsilon}

\def\a{\alpha}

\def\b{\beta}

\def\d{\partial}

\def\i{\iota}

\def\cf{\mathfrak c}
\def\R{\mathbb{R}}

\def\Z{\mathbb{Z}}

\def\Z{\mathbb{Z}}
\def\S1{\bf S^1}

\newcommand{\D}{\bm{\mathcal D}}
\newcommand{\X}{\mathcal X}
\newcommand{\E}{\bm{\mathcal E}}
\newcommand{\Se}{\bm{\mathcal S}}
\newcommand{\Ze}{\bm{\mathcal Z}}

\mathsurround=1pt

\DeclareMathOperator{\Emb}{Emb}

\makeatletter
\def\equalsfill{$\m@th\mathord=\mkern-7mu
\cleaders\hbox{$\!\mathord=\!$}\hfill
\mkern-7mu\mathord=$}
\makeatother

\begin{document}
%LINE NUMBERS
%\linenumbers
%END LINE NUMBERS

\abovedisplayskip=6pt plus3pt minus3pt
\belowdisplayskip=6pt plus3pt minus3pt

\title[Centers of disks in Riemannian manifolds]{\bf Centers of disks in Riemannian manifolds}

\date{\today \,(Last Typeset)}
\subjclass[2010]{Primary: 53C40, 54C65; Secondary: 57S25, 52A20.}

%53C40   	Global submanifolds
%54C65   	Selections
%57S25   	Groups acting on specific manifolds
%54B20   	Hyperspaces
%52A20   Convex sets in n dimensions

\keywords{Continuous selection, equivariant, proper actions, actions on disks.}
\thanks{The research of M. G. was supported in part by NSF grants DMS-1308777 and DMS-1711400.}

\author{Igor Belegradek}
\address{Igor Belegradek\\ School of Mathematics\\ Georgia Institute of
Technology\\ Atlanta, GA, USA 30332\vspace*{-0.1in}}
\email{ib@math.gatech.edu\vspace*{-0.1in}}
\urladdr{www.math.gatech.edu/$\sim$ib}

\author{Mohammad Ghomi}
\address{Mohammad Ghomi\\School of Mathematics, Georgia Institute of Technology,
Atlanta, GA , USA 30332\vspace*{-0.1in}}
\email{ghomi@math.gatech.edu\vspace*{-0.1in}}
\urladdr{www.math.gatech.edu/$\sim$ghomi}

\begin{abstract}
We prove the existence of a center,  or continuous selection of a point,
in the relative interior of $C^1$ embedded $k$-disks 
in Riemannian $n$-manifolds. 
If $k\le 3$ the center can be made equivariant 
with respect to the isometries of the manifold, and under mild assumptions 
the same holds for $k=4=n$. By contrast, for every $n\ge k\ge 6$ there are examples where
an equivariant center does not exist.
The center can be chosen to agree with any of the classical centers
defined on the set of convex compacta in the Euclidean space. 
\end{abstract}
\maketitle
%\tableofcontents
\thispagestyle{empty}

\section{Introduction}
\label{sec: intro}

There are several distinguished points inside a compact convex subset of
the Euclidean space, 
see~\cite{KMT}, \cite[Chapter 12]{moszynska:book}, and~\cite[Section 5.4.1]{Sch-book}.
In this paper we investigate if there is a geometrically
meaningful point inside every embedded disk in a Riemannian manifold.
Any such point should depend continuously on the disk and be equivariant under 
isometries.

To set the stage
let $G$ be a subgroup of the isometry group 
of a smooth (i.e., $C^\infty$) connected Riemannian manifold $M$,
and $\X(M)$ be the space of compact connected $C^1$ embedded submanifolds of $M$ 
equipped with the $C^1$ topology (see Section~\ref{sec: space of sbmfls}). 
For a subspace $X$ of $\X(M)$, a {\em center\,}
is a continuous map $\cf\co X\to M$ such that $\cf(D)$ lies in the relative interior of $D$
for every $D\in X$. 
If in addition $X$ is \mbox{$G$-invariant} and $\cf$ is $G$-equivariant, we say that $\cf$
is {\em $G$-equivariant}, which simply means that $\cf(g D)=g\hspace{.3pt}\cf(D)$ for all $D\in X$, $g\in G$. 
We investigate the existence of $\cf$ for a given triple $(M,G,X)$.

The best-known examples of a center are the center of mass
and the Steiner point, which we call
the {\em classical centers\,}. They are $\Iso(\R^n)$-equi\-var\-i\-ant centers
on the space of convex compacta in $\X(\R^n)$, where
$\Iso(\R^n)$ is the group of Euclidean isometries, see~\cite{Sch-book}.

To state our findings, 
for any $D\in \X(M)$ let $G_D$ be the subgroup of $G$ which maps $D$ to itself, and 
$D^{G_D}$ be the points of $D$ which are fixed by $G_D$ (see Section \ref{sec: groups}). 
Note that if $\cf$ is $G$-equivariant, then $\cf(D)\in D^{G_D}$, and in particular,
no $G$-equivariant center exists if $D^{G_D}=\emptyset$ for some $D\in X$.
Our main result is as follows:

\begin{thm}\label{thm: intro main}
The space of submanifolds $D\in\X(M)$  
such that $D^{G_D}$ is contractible
admits a $G$-equivariant center. 
Furthermore, this center can be chosen to agree
with any given $G$-equivariant center defined on a closed subset 
$A$ of $\X(M)$.
\end{thm}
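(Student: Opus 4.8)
The plan is to recast the statement as a \emph{continuous selection} problem and to solve it by an equivariant, relative version of Michael's selection theorem. Let $X\subseteq\X(M)$ be the subspace in question, consisting of those $D$ with $D^{G_D}$ contractible, and for $D\in X$ put $\Phi(D):=D^{G_D}\cap\operatorname{relint}(D)$. A continuous map $\cf\co X\to M$ with $\cf(D)\in\Phi(D)$ for all $D$ is exactly a center whose values are confined to $D^{G_D}$, and once produced by a $G$-equivariant construction it is automatically $G$-equivariant. Moreover the prescribed center on $A$, being a center and hence $G$-equivariant, satisfies $\cf(D)\in D^{G_D}\cap\operatorname{relint}(D)=\Phi(D)$ for $D\in A\cap X$. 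So it suffices to build a $G$-equivariant continuous selection of the carrier $\Phi$ over $X$ that extends the given partial selection over the closed $G$-invariant set $A\cap X$.

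I would first identify the values of $\Phi$. Let $F=F_D\subseteq M$ be the fixed-point set of $G_D$ acting on $M$, with $G_D$ compact (by properness of the isometry action and compactness of $D$); then $F$ is a totally geodesic smooth submanifold of $M$ and $D^{G_D}=D\cap F$. Since an isometry of $M$ preserving $D$ and fixing a boundary point $p\in\partial D$ preserves the tangent half-space $T_pD$ together with its inward normal direction, $F$ is transverse to $\partial D$, so the boundary of the $C^1$ manifold-with-boundary $D\cap F$ is $\partial D\cap F$ and its interior is $\Phi(D)=F\cap\operatorname{relint}(D)$. By hypothesis $D^{G_D}=D\cap F$ is contractible, hence so is its interior $\Phi(D)$, which is therefore a nonempty, completely metrizable absolute retract; and since $D$, hence $F$ and $\Phi(D)$, varies $C^1$-continuously, the family $\{\Phi(D)\}$ is equi-$LC^\infty$.

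The heart of the argument is the \textbf{lower semicontinuity of $\Phi$}. Fix $D_0\in X$ and $p\in\Phi(D_0)$; for $D$ near $D_0$ I must find a point of $\Phi(D)$ near $p$. Choose any $q\in D$ near $p$. Since the isometry group acts properly on $M$, it acts properly on $\X(M)$ (using compactness of the submanifolds), so as $D\to D_0$ the stabilizers $G_D$ stay in a fixed compact set and accumulate only inside $G_{D_0}$; because $G_{D_0}$ fixes $p$, it follows that for $D$ close enough to $D_0$ and $q$ close enough to $p$ the orbit $G_D\cdot q$ lies in an arbitrarily small geodesically convex ball $B$ of $M$ about $p$. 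Let $m(D)\in B$ be the Riemannian center of mass of the orbit $G_D\cdot q$; as the unique minimizer of a $G_D$-invariant function on $B$ it is $G_D$-fixed, and it is close to $p$. Finally, the nearest-point retraction $\pi_D$ onto the $C^1$ submanifold $D$, defined on a tube about $D$, commutes with $G_D$, so $\pi_D(m(D))\in D$ is $G_D$-fixed and, lying close to $p\in\operatorname{relint}(D_0)$, belongs to $\operatorname{relint}(D)$; thus $\pi_D(m(D))\in\Phi(D)$ and is near $p$. Hence $\Phi$ is lower semicontinuous. The very same center-of-mass-and-retract recipe, run with $p\in\Phi(D_0)$ and $G_D$ contained in a fixed compact group over a slice around $D_0$, furnishes explicit local equivariant selections.

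It remains to assemble a global selection. Properness makes $G$ act properly on $X$; by the slice theorem $X$ is covered by $G$-invariant tubes $G\times_{H}S$ with $H$ compact, over each of which a $G$-equivariant selection of $\Phi$ is the same as an $H$-equivariant one over the slice $S$ — provided, as just noted, by the center-of-mass recipe (or by the relative $LC^\infty$ Michael theorem followed by averaging over $H$). One then runs Michael's iterative construction $G$-equivariantly: successive approximate selections are glued from the local models by a $G$-invariant partition of unity, taking Riemannian centers of mass of the (now nearby) local values — an operation that is $G$-equivariant and, carried out in small balls meeting $F$ convexly inside $\operatorname{relint}(D)$, stays in $\Phi(D)$ — and the limit is the desired $G$-equivariant center; the prescribed values over $A\cap X$ are incorporated by replacing $\Phi(D)$ there with the singleton $\{\cf(D)\}$, which keeps $\Phi$ lower semicontinuous since $\cf$ is continuous. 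I expect the \textbf{main obstacle} to be precisely the lower semicontinuity of $\Phi$: a center is almost immediate from a selection theorem in the non-equivariant case, but the constraint $\cf(D)\in D^{G_D}$ makes the admissible set jump with the stabilizer, and it is only properness — forcing limits of stabilizers into the limit stabilizer — together with the Riemannian-center-of-mass correction that controls this. The remaining work is the routine but careful bookkeeping of making tubular neighborhoods, slices, collars, partitions of unity, and the radii governing the centers of mass depend continuously and $G$-equivariantly, and — since $\X(M)$ is infinite-dimensional — of invoking the form of Michael's theorem valid for equi-$LC^\infty$ carriers (alternatively, one may first pass to the convex-valued carrier assigning to $D$ the probability measures on $D^{G_D}$, select there, and then convert the measure to a point).
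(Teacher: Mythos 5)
There is a genuine gap at the step you yourself flag as routine: the invocation of ``the form of Michael's theorem valid for equi-$LC^\infty$ carriers'' over the base $\X(M)$. No such theorem exists in the generality you need. Michael's selection theorems for lower semicontinuous carriers with non-convex values require either a generalized convexity structure on the values or finite covering dimension of the domain (the finite-dimensional theorem with $n$-connected, equi-$LC^n$ values); for infinite-dimensional domains, equi-$LC^\infty$ together with contractibility of all values is known \emph{not} to suffice (there are counterexamples in the selection literature, and the paper's introduction explicitly points out that the existing selection theory only yields a center under generalized convexity of the $D$'s or finite-dimensionality of $X$). Since $\X(M)$ is infinite-dimensional and the sets $D^{G_D}$ are merely contractible compact manifolds, neither hypothesis holds, so both your main route and your proposed alternative (select a probability measure on $D^{G_D}$ and ``convert the measure to a point'' --- there is no continuous barycenter map into a non-convex target) collapse at exactly the hard point. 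Lower semicontinuity of $D\mapsto \mathrm{relint}(D^{G_D})$, which you treat as the heart of the matter, is true but is not where the difficulty lies; the paper states it in passing. What the paper actually proves is much stronger than LSC: after stratifying $\X(M)$ by the dimension and the number of components of $D^{G_D}$, the assignment $D\mapsto D^{G_D}$ is continuous (Lemma~\ref{lem: fixed pt set contin}), the tautological map $\E^{k,l}(M)\to\D^{k,l}(M)$ with fibers $D^{G_D}$ is a locally trivial bundle (Lemma~\ref{lem: loc trivial}), and --- the key observation --- so is its $G$-quotient $\E^{k,l}(M)/G\to\D^{k,l}(M)/G$ (Lemma~\ref{lem: local triviality quotient}). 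Local triviality with compact contractible manifold (hence AR) fibers lets one quote Palais' section-extension theorem for bundles over metrizable bases, bypassing selection theory entirely; the strata are then assembled by induction on $\dim D^{G_D}$ using an equivariant Tietze-type extension into an equivariantly embedded Hilbert space. Your proposal contains no substitute for this local triviality, and without it the equivariant Michael iteration you sketch cannot be carried out.

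Two secondary problems, fixable but not free: your lower semicontinuity and gluing arguments repeatedly use the nearest-point projection onto $D$ and onto $D^{G_D}$, but these are only $C^1$ submanifolds, which in general have no uniform neighborhood with unique nearest points; the paper has to define tubular neighborhoods of $C^1$ submanifolds via an auxiliary smoothing diffeomorphism, and obtains equivariance only for projections onto the smooth fixed-point manifold $M^{G_D}$. Likewise your one-line transversality claim for $M^{G_D}\cap\partial D$ (via the tangent half-space at a boundary point) ignores that $G_D$ acts on $D$ only by $C^1$ diffeomorphisms; the paper needs Palais' approximation of $C^1$ actions by smooth ones (Lemmas~\ref{lem: rel Palais} and~\ref{lem: fixed point set is C1}) to make this precise. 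These can be repaired along the paper's lines, but the missing selection theorem cannot.
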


An important example of $A$ is the set of convex compacta in $\R^n$
equipped with one of the classical centers mentioned above. In this case
our results are summarized in Corollary~\ref{cor: all for Rn} below.

We prove Theorem~\ref{thm: intro main} by reducing the problem to finding sections of 
certain bundles with $D^{G_D}$ as fibers, and then fitting the sections together
to define a global center. The construction involves some choices,
and the resulting center is not canonical. 

Let us review some natural conditions under 
which $D^{G_D}$ is contractible, which yield the 
following corollaries of Theorem~\ref{thm: intro main}. 
It is to be understood that {\em each of these corollaries has a relative version,\,}
i.e., the corresponding center can be chosen to agree
with any given $G$-equivariant center defined on a closed subset of
$\X(M)$.  
First note that if $G$ is trivial, then $D^{G_D}=D$. So we obtain:

\begin{cor}
The space of contractible submanifolds $D\in\X(M)$ admits a center. 
\end{cor}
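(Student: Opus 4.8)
The plan is to deduce this corollary immediately from Theorem~\ref{thm: intro main} by specializing $G$. Concretely, I would apply Theorem~\ref{thm: intro main} with $G$ taken to be the trivial subgroup $\{1\}$ of $\Iso(M)$. Then for every $D\in\X(M)$ the stabilizer $G_D$ is trivial, so the fixed-point set $D^{G_D}$ equals $D$ itself, and the hypothesis ``$D^{G_D}$ is contractible'' of Theorem~\ref{thm: intro main} reduces to the single condition that $D$ be contractible. Hence the space of submanifolds to which that theorem applies is precisely the space of contractible $D\in\X(M)$.

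It remains only to observe that the notion of center produced is the right one. Since $G=\{1\}$, the requirement that a map $X\to M$ be $G$-equivariant is vacuous, so the ``$G$-equivariant center'' furnished by Theorem~\ref{thm: intro main} is nothing but a center in the sense of Section~\ref{sec: intro}: a continuous map sending each $D$ into its relative interior. This establishes the corollary, and the relative version (agreement with any prescribed center on a closed subset $A$ of $\X(M)$ consisting of contractible submanifolds) follows in the same way from the ``furthermore'' clause of Theorem~\ref{thm: intro main}.

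I do not expect any genuine obstacle here, since all the substantive content — the reduction to sections of bundles with fibers $D^{G_D}$ and the patching of local sections into a global selection — is already carried out in the proof of Theorem~\ref{thm: intro main}; the only thing to check is the trivial identification $D^{G_D}=D$ above. For completeness I would note the degenerate case $\dim D=0$: such a $D$ is a single point, which is contractible and whose relative interior is itself, so a center exists trivially and is consistent with the statement. The one conceptual caveat worth flagging is that this center, inherited from Theorem~\ref{thm: intro main}, is not canonical, as it depends on the choices made in that construction.
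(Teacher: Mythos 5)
Your argument is exactly the paper's: take $G$ to be the trivial group, note that then $G_D$ is trivial and $D^{G_D}=D$, so the contractibility hypothesis of Theorem~\ref{thm: intro main} becomes contractibility of $D$, and equivariance is vacuous. This is correct, including your remark that the relative version follows from the ``furthermore'' clause.
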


Another condition that ensures contractibility of $D^{G_D}$ 
is that each $D\in\X(M)$ is homeomorphic to a disk of dimension $\le 3$.
This is due to the  fact that any smooth action of a compact 
Lie group on a disk of dimension $\le 3$ is smoothly
equivalent to a linear action, which implies that $D^{G_D}$ is a disk.  
For actions on $2$-disks linearity is a standard consequence of the uniformization theorem
(see the beginning of Appendix~\ref{sec: actions on D4}), while 
the $3$-dimensional case is established in~\cite[Theorem B]{KwaSch-S3-linear}. 

Let $\mathfrak{D}^{k\hspace{-.3pt}}(M)$ be the subspace of $D\in\X(M)$ such that 
$D$ is homeomorphic to a disk of dimension $\le k$. 
From the previous paragraph we obtain:

\begin{cor}
\label{cor: disks of dim at most 3}
$\mathfrak{D}^{3\hspace{-.3pt}}(M)$ 
admits a $G$-equivariant center.
\end{cor}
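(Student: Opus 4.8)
The plan is to deduce this from Theorem~\ref{thm: intro main} by showing that the subspace $\mathfrak{D}^{3}(M)$ lies inside the space of $D\in\X(M)$ with $D^{G_D}$ contractible --- in fact, that $D^{G_D}$ is a disk meeting the relative interior of $D$ for every $D\in\mathfrak{D}^{3}(M)$. Granting this, Theorem~\ref{thm: intro main}, restricted from the space of $D$ with $D^{G_D}$ contractible to the subspace $\mathfrak{D}^{3}(M)$, supplies the desired $G$-equivariant center, and the relative statement follows from the relative version of Theorem~\ref{thm: intro main}. So fix $D\in\mathfrak{D}^{3}(M)$: it is a $C^1$ submanifold of $M$, homeomorphic to a $k$-disk with $k\le 3$, on which $G_D$ acts by restrictions of isometries of $M$.

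The first step is to replace $G_D$ by its closure $K$ in $\Iso(M)$. Since $\Iso(M)$ is a Lie group and $K$ is a closed subgroup preserving the compact set $D$, the group $K$ is a compact Lie group (see Section~\ref{sec: groups}); it still acts on $D$, and because the orbit maps $g\mapsto gx$ are continuous while $G_D$ is dense in $K$, we have $D^{G_D}=D^{K}$. The second step reconciles the $C^1$ hypothesis with the smoothness needed for linearization: using that $\Iso(M)$ acts smoothly on $M$, I would approximate the inclusion $D\hookrightarrow M$ in the $C^1$ topology, $K$-equivariantly, by a smooth embedding whose image $D'$ is a $K$-invariant smooth submanifold of $M$, $K$-equivariantly diffeomorphic to $D$. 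Then $K$ acts smoothly on $D'$, which is smoothly --- hence topologically --- a $k$-disk with $k\le 3$, and $D^{G_D}=D^{K}$ is homeomorphic to $(D')^{K}$.

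Now the linearization facts stated just before the corollary finish the proof: every smooth action of a compact Lie group on a $k$-disk with $k\le 3$ is smoothly equivalent to an orthogonal action on the unit ball $B^{k}\subset\R^{k}$ --- immediate for $k\le 1$, a consequence of the uniformization theorem for $k=2$, and \cite[Theorem~B]{KwaSch-S3-linear} for $k=3$. Under such an equivalence $(D')^{K}$ corresponds to $(B^{k})^{K}=(\R^{k})^{K}\cap B^{k}$, the unit ball of the linear subspace $(\R^{k})^{K}$; this is a disk, it is contractible, and, containing the center $0$ of $B^{k}$, it meets the interior of $B^{k}$. Tracing back, $D^{G_D}$ is a disk meeting the relative interior of $D$, which is exactly what was needed.

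The step I expect to be the main obstacle is the second one. Since submanifolds in $\X(M)$ are only $C^1$, while the linearization theorems are stated for smooth actions, one must genuinely upgrade the $C^1$ action of the compact group $K$ on the $C^1$ disk $D$ to a smooth action on a disk without changing the fixed-point set up to homeomorphism; everything else is assembly. (Alternatively one could invoke a topological linearization of compact group actions on disks of dimension $\le 3$, but the smooth route keeps us closer to the references already cited.) The hypothesis $k\le 3$ is essential: it is what makes low-dimensional linearization, hence contractibility of $D^{G_D}$, available, and by the examples announced in the introduction it cannot be relaxed to $k\ge 6$.
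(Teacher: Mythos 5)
Your proposal is correct and follows essentially the same route as the paper: deduce the corollary from Theorem~\ref{thm: intro main} by showing $D^{G_D}$ is contractible, using compactness of $G_D$ and the linearization of smooth compact Lie group actions on disks of dimension $\le 3$ (uniformization for $k\le 2$, \cite[Theorem B]{KwaSch-S3-linear} for $k=3$). The $C^1$-to-smooth upgrade you flag as the main obstacle is exactly what the paper supplies in Lemma~\ref{lem: rel Palais} via Palais's equivariant $C^1$-approximation theorem, which conjugates the given $C^1$ action of $G_D$ to a smooth one by a $C^1$ diffeomorphism, so the fixed point set is unchanged up to homeomorphism.
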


On the other hand, contractibility of $D^{G_D}$ generally fails 
when $D$ is homeomorphic to a $4$-disk, because then 
$D^{G_D}$ may be an acyclic non-contractible $3$-manifold, see Lemma~\ref{lem: acyclic 4d}.
For this case to occur $G_D$ must be generated by an involution
whose action on $D$ reverses the orientation, see Lemmas~\ref{lem:4-disk1} and~\ref{lem:4-disk2}. 
Avoiding this case, we obtain  the existence of centers in dimension $4$:

\begin{cor}
\label{cor: 4-disk orient}
If $\dim(M)=4$ and the $G$-action on $M$ is orientation-preserving, then 
$\mathfrak{D}^{4\hspace{-.3pt}}(M)$   
admits a $G$-equivariant center.
\end{cor}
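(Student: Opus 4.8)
The plan is to derive this from Theorem~\ref{thm: intro main} by checking that $\mathfrak{D}^{4}(M)$ is a $G$-invariant subspace of $\X(M)$ on which $D^{G_D}$ is always contractible. That $\mathfrak{D}^{4}(M)$ is $G$-invariant is immediate, since $g$ carries $D$ homeomorphically onto $gD$, so $gD$ is again homeomorphic to a disk of dimension $\le 4$. Granting contractibility of each $D^{G_D}$, the existence of a $G$-equivariant center on $\mathfrak{D}^{4}(M)$ follows by restricting the center produced by Theorem~\ref{thm: intro main}, and the relative assertion follows from the relative part of Theorem~\ref{thm: intro main}: if a $G$-equivariant center is given on a closed subset $A\subseteq\mathfrak{D}^{4}(M)$, then $A$ is closed in $\{D\in\X(M): D^{G_D}\text{ contractible}\}$, and the extension furnished by Theorem~\ref{thm: intro main} restricts back to $\mathfrak{D}^{4}(M)$. (In the application to $M=\R^4$ one takes $A$ to be the convex compacta with a classical center and $G=\Iso^+(\R^4)$.)

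So the whole point is the contractibility of $D^{G_D}$ for $D\in\mathfrak{D}^{4}(M)$, which I would first recast as a statement about group actions on disks. Since $D$ is compact and every $g\in G$ with $gD=D$ is an isometry of $M$, hence determined by its $1$-jet at an interior point of $D$, the closure $\overline{G_D}$ of $G_D$ in $\Iso(M)$ is a compact Lie group still preserving $D$, with $D^{\overline{G_D}}=D^{G_D}$. Thus it suffices to show: a compact Lie group $K$ acting on a manifold $D$ homeomorphic to a disk of dimension $k\le 4$ has contractible fixed set $D^K$, provided the action is orientation-preserving when $k=4$. The orientation hypothesis is exactly what is available: when $k=4=\dim M$ the submanifold $D$ has codimension $0$, so orientation-preserving isometries of $M$ restrict to orientation-preserving homeomorphisms of $D$; when $k\le 3$ no orientation condition is needed.

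For $k\le 3$ this is precisely the content underlying Corollary~\ref{cor: disks of dim at most 3}: the $K$-action on $D$ is smoothly equivalent to a linear action, whose fixed set is a subdisk, in particular nonempty and contractible. The decisive case is $k=4$, where I would invoke the study of orientation-preserving compact Lie group actions on $D^4$ carried out in Appendix~\ref{sec: actions on D4}. By Lemma~\ref{lem: acyclic 4d} together with Lemmas~\ref{lem:4-disk1} and~\ref{lem:4-disk2}, the fixed set of a compact Lie group action on $D^4$ is contractible unless the group is generated by an orientation-reversing involution, in which case $D^K$ may be an acyclic $3$-manifold with nontrivial fundamental group. Since our action is orientation-preserving, this exceptional configuration cannot occur, so $D^K$ is contractible, which completes the reduction and hence the corollary.

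The main obstacle is this last step: in contrast to dimensions $\le 3$, actions on $4$-disks need not be linear, and even topologically their fixed-point sets are delicate, so the classification behind Lemmas~\ref{lem:4-disk1}--\ref{lem:4-disk2} — in particular the fact that failure of contractibility forces $G_D$ to be generated by an orientation-reversing involution — is where the real work lies. A secondary technical point, to be dispatched when setting up those lemmas, is that $D$ is only $C^1$-embedded, so a priori the induced $K$-action carries only the regularity of the ambient isometries along $\partial D$; one checks this is harmless, either by a $K$-equivariant smoothing near $\partial D$ or because the relevant structure results hold for $C^1$ (indeed topological) actions on $D^4$.
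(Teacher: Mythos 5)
Your argument is essentially the paper's own: the corollary is obtained by feeding Theorem~\ref{thm: intro main} the contractibility of $D^{G_D}$, which for $4$-dimensional (codimension-zero) disks follows from Lemmas~\ref{lem:4-disk1} and~\ref{lem:4-disk2} because an orientation-preserving $G$-action rules out the exceptional orientation-reversing involution case, and for disks of dimension $\le 3$ from linearization of compact group actions. Your closing caveat about the $C^1$ regularity is exactly what the paper handles via Lemma~\ref{lem: rel Palais} (equivariant smoothing), so there is no substantive divergence.
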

 
Another result for $4$-manifolds can be obtained by observing that
no compact acyclic non-contractible $3$-manifold smoothly embeds into $S^3$
due to the topological Schoenflies theorem~\cite{Bro-sch}.
In Lemma~\ref{lem: fixed point set is C1} we note that
$D^{G_D}$ is a $C^1$ submanifold of $M^{G_D}$, the fixed point set of $G_D$ in $M$.
Therefore, a non-contractible $D^{G_D}$ cannot occur if for any involution in $G$
its fixed point set in $M$
smoothly embeds into $S^3$. The latter can be forced by the following
geometric assumptions.

\begin{cor}
\label{cor: 4-disk main}
If either $M=S^4$ with its standard action of $G=O(5)$,
or $M$ is a Hadamard $4$-manifold with isometry group $G$, 
then $\mathfrak{D}^{4\hspace{-.3pt}}(M)$   
admits a  $G$-equivariant center.
\end{cor}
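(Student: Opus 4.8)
The plan is to verify, for \emph{every} $D\in\mathfrak{D}^{4}(M)$, the hypothesis of Theorem~\ref{thm: intro main}---namely that $D^{G_D}$ is contractible---and then to quote that theorem together with its relative version. First I would dispose of the easy range of dimensions. Recall that $G_D$ is compact (by properness of the isometric $G$-action), so $D^{G_D}$ is nonempty, being either a disk or, by the lemmas quoted below, a nonempty acyclic manifold; and if $\dim D\le 3$ then $D^{G_D}$ is a disk, as explained before Corollary~\ref{cor: disks of dim at most 3}, so there is nothing to prove. Thus the remaining case is $\dim D=4$, where by Lemmas~\ref{lem:4-disk1} and~\ref{lem:4-disk2} I may assume that $G_D$ is generated by an orientation-reversing involution $g$ of $D$; then $D^{G_D}=D^g$ is a compact acyclic $3$-manifold by Lemma~\ref{lem: acyclic 4d}, and it remains to show it is contractible.

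Next I would feed in the geometry. By Lemma~\ref{lem: fixed point set is C1}, $D^g$ is a $C^1$ submanifold of $M^{G_D}=M^g$, which, as the fixed-point set of an isometric involution, is a closed totally geodesic---hence smooth---submanifold of $M$. Since $D^g$ is $3$-dimensional it lies in a component $N$ of $M^g$ with $\dim N\ge 3$; moreover $\dim N\ne 4$, since otherwise $N$ would be open and closed in the connected manifold $M$, forcing $N=M$ and $g=\id$. Hence $\dim N=3$, and I claim $N$ smoothly embeds into $S^{3}$ in either case of the corollary. If $M=S^{4}$ with the standard $O(5)$-action, then $g$ is an orthogonal involution of $\R^{5}$ whose $(+1)$-eigenspace $V$ is a proper subspace, so $M^g=S^{4}\cap V$ is a round sphere of dimension $\dim V-1$, and a $3$-dimensional component forces $M^g=S^{3}$. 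If $M$ is a Hadamard $4$-manifold with $G=\Iso(M)$, then $N$ is a complete totally geodesic submanifold of $M$, hence nonpositively curved by the Gauss equation and, being complete and totally geodesic, convex in $M$; thus $N$ is contractible and in particular simply connected, so by the Cartan--Hadamard theorem $N$ is diffeomorphic to $\R^{3}$. In either case $N$, and therefore $D^g\subset N$, smoothly embeds into $S^{3}$ (after smoothing the $C^1$ embedding).

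Finally I would invoke Schoenflies: a compact acyclic $3$-manifold that embeds smoothly into $S^{3}$ is contractible, as recalled above from~\cite{Bro-sch}, so $D^g$ is contractible, contrary to assumption. Therefore $D^{G_D}$ is contractible for all $D\in\mathfrak{D}^{4}(M)$, and Theorem~\ref{thm: intro main} (in its relative version) produces a $G$-equivariant center on $\mathfrak{D}^{4}(M)$ agreeing with any prescribed $G$-equivariant center on a closed subset.

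The only substantive input is the identification of the $3$-dimensional components of the fixed set of an involution: elementary linear algebra for $S^{4}$, and the observation that a complete totally geodesic submanifold of a Hadamard manifold is again a Hadamard manifold, so Cartan--Hadamard applies. I expect the point needing care to be the dimension bookkeeping---ruling out $\dim N=4$ and checking that $D^g\hookrightarrow M^g\hookrightarrow S^{3}$ is a chain of smooth embeddings, so that the Schoenflies-type statement genuinely applies; beyond that the argument is just an assembly of the lemmas quoted above.
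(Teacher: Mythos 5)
Your argument is correct and is essentially the paper's own proof: reduce via Lemmas~\ref{lem:4-disk1} and~\ref{lem:4-disk2} to the case where $G_D$ is generated by an orientation-reversing involution with compact acyclic $3$-dimensional fixed set, place $D^{G_D}$ inside $M^{G_D}$ via Lemma~\ref{lem: fixed point set is C1}, identify the relevant component of $M^{G_D}$ with $S^3$ (linear algebra on $S^4$) or with $\R^3$ (Cartan--Hadamard), and use the Schoenflies theorem to conclude contractibility before invoking the relative version of Theorem~\ref{thm: intro main}. Two cosmetic slips: the structure result you need is Lemma~\ref{lem:4-disk1}, not Lemma~\ref{lem: acyclic 4d} (which is the counterexample construction), and the closing phrase ``contrary to assumption'' is stray, since your argument is a direct proof rather than a contradiction.
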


Recall that a {\em Hadamard manifold\,} is a
contractible complete Riemannian manifold of nonpositive sectional curvature,
and the fixed point set of any isometric action on a Hadamard manifold
is diffeomorphic to a Euclidean space, due to the Cartan-Hadamard theorem.

The above $4$-dimensional results depend on establishing that the
fixed point set of a smooth compact Lie group action on the $4$-disk
is either a disk or a compact acyclic \mbox{$3$-manifold}.
We check this in Lemmas~\ref{lem:4-disk1} and~\ref{lem:4-disk2}. 
A key ingredient is that any smooth finite group action on the $4$-disk
fixes a point. This was
established in~\cite[Theorem II.1]{BKS-fixed-pt-D4} modulo an announcement of Thurston
that any non-free smooth finite group action on $S^3$ preserves a round metric.
%(that any smooth finite group action on $S^3$ is linear
%provided some nontrivial group element has fixed point set of positive dimension).
Thurston's claim was finally proved in~\cite{DinLee-RF-geom} via a Ricci flow arguments,
and his outline was made rigorous in~\cite[Corollary 1.1]{BLP}
for orientation-preserving actions.

In dimension $5$ the situation is unclear, e.g.,  
it does not seem to be known whether every smooth action of a
compact Lie group on a $5$-disk has a fixed point.
The strategy used in Corollary~\ref{cor: 4-disk main} for linear actions on $\R^4$
breaks down one dimension up, because there is a linear involution of $\R^5$
whose fixed point set on some embedded disk is not contractible, see Lemma~\ref{lem: acyclic 5d}.

{\em Starting from dimension $6$ an equivariant center need not exist\,}, namely, 
there are triples $(M,X,G)$ such that $D^{G_D}=\emptyset$ for some smoothly embedded disk
$D\in X$, 
which rules out the existence of a $G$-equivariant center, as we had mentioned earlier.
Indeed, for each \mbox{$n\ge 6$} there is a smooth action of the alternating group $A_5$ on $S^n$ 
with exactly one fixed point, see~\cite{BakMor} and references therein, 
and excising an invariant tubular neighborhood 
of this point yields a fixed point free smooth action of $A_5$ on the $n$-disk.
One can then embed the resulting $A_5$-action on the $n$-disk into 
a fixed point free $A_5$-action on a boundaryless manifold, 
e.g., into $\R^n$  by attaching a collar along the boundary, or into $S^n$
by doubling along the boundary.
 
The Mostow-Palais theorem~\cite[Theorem VI.4.1]{Bre-book} yields a smooth
embedding of the above fixed point free
$A_5$-action on the $6$-disk into an orthogonal $A_5$-action on some $\R^r$.
(With some effort one can obtain an explicit upper bound for 
$r$ but we will not attempt it here because we are unable to determine 
the optimal $r$.) 
As we shall explain at the end of Section~\ref{sec: the proof},
this discussion easily implies the following:

\begin{prop}
\label{prop: no center}
For any $m$, $n$ such that either $m\ge n+r-6\ge r$ or
$m\ge n\ge r$ there is an orthogonal $A_5$-action on $\R^m$
that preserves a smoothly embedded $n$-disk on which $A_5$
acts without a fixed point. In particular,
$\mathfrak{D}^{n\hspace{-.3pt}}(\R^m)\setminus\mathfrak{D}^{n-1\hspace{-.3pt}}(\R^m)$  
does not admit an $O(m)$-equivariant center. 
\end{prop}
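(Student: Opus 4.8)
The plan is to obtain the ``in particular'' clause as a formal consequence of the first assertion, via the general principle recorded in Section~\ref{sec: intro}: if $X\subset\X(\R^m)$ is an $O(m)$-invariant subspace containing a disk $D$ that is invariant under a group $A_5$ acting orthogonally on $\R^m$ with no fixed point on $D$, then the image of $A_5$ in $O(m)$ lies in $G_D$, so any putative $O(m)$-equivariant center $\cf$ on $X$ would satisfy $\cf(D)\in D^{G_D}\subseteq D^{A_5}=\emptyset$; and the $n$-disk to be built will be a $C^\infty$ embedded submanifold homeomorphic to $D^n$, hence will lie in $X=\mathfrak D^{n}(\R^m)\setminus\mathfrak D^{n-1}(\R^m)$. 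So it is enough to exhibit, for each $(m,n)$ in the two stated ranges, an orthogonal $A_5$-action on $\R^m$ and an $A_5$-invariant smoothly embedded $n$-disk $D\subset\R^m$ that is disjoint from the fixed set of $A_5$. The starting data are exactly what the discussion preceding the proposition provides: a smooth fixed-point-free $A_5$-action on $D^6$, and, by Mostow--Palais, a smooth $A_5$-equivariant embedding of $D^6$ into an orthogonal representation $\rho\co A_5\to O(r)$; ``fixed-point-free'' says precisely that (the image of) $D^6$ misses the fixed subspace $(\R^r)^{A_5}$ and hence, being compact, lies at positive distance from it.

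When $m\ge n+r-6\ge r$ one has $n\ge 6$ and $m-r\ge n-6\ge 0$, and I would use a product. Give $\R^m=\R^r\oplus\R^{m-r}$ the orthogonal action $g\cdot(v,w)=(\rho(g)v,w)$ and let $D^{n-6}\subset\R^{m-r}$ be a round disk with the trivial action, so that $D^6\times D^{n-6}\subset\R^r\oplus\R^{m-r}$ is a compact $A_5$-invariant topological $n$-disk whose only non-smooth locus is the codimension-two corner $\partial D^6\times\partial D^{n-6}$. Because $A_5$ acts trivially on the second factor, a standard equivariant smoothing of this corner, arbitrarily close to $D^6\times D^{n-6}$, produces a smoothly embedded $A_5$-invariant $n$-disk $D$. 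The fixed set of the $A_5$-action on $\R^m$ is $(\R^r)^{A_5}\oplus\R^{m-r}$, which meets $D^6\times D^{n-6}$ in $(D^6\cap(\R^r)^{A_5})\times D^{n-6}=\emptyset$ and is therefore at positive distance from $D^6\times D^{n-6}$; so the nearby disk $D$ is disjoint from it as well, i.e.\ $A_5$ acts on $D$ without a fixed point.

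When $m\ge n\ge r$ the product above is unavailable (it would require $m\ge n+r-6$), so I would take a tubular neighborhood instead. Choose orthogonal $A_5$-equivariant inclusions $\R^r\subset\R^n\subset\R^m$ with trivial action on the orthogonal complements; then $D^6\subset\R^r\subset\R^n$ still misses, hence stays at positive distance from, the fixed set of $A_5$ in $\R^n$. Let $D$ be an $A_5$-invariant smooth closed tubular neighborhood of $D^6$ in $\R^n$ of small radius; such a $D$ exists because $A_5$ is finite, e.g.\ by tapering the radius of a metric neighborhood of $D^6$ smoothly to zero along $\partial D^6$ and averaging over $A_5$. For small enough radius $D$ is a smoothly embedded $n$-disk in $\R^n\subset\R^m$, $A_5$-invariant, and disjoint from the fixed set of $A_5$ in $\R^m$.

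In either case the resulting $D$ is a $C^\infty$ embedded $n$-disk in $\R^m$, invariant under an orthogonal $A_5$-action whose image lies in $G_D$ and has no fixed point on $D$, so $D^{G_D}\subseteq D^{A_5}=\emptyset$ and the proof concludes by the first paragraph. I do not expect any serious difficulty---the excerpt itself calls the deduction easy---and anticipate that the only points requiring care are the two routine pieces of equivariant differential topology (the equivariant corner-smoothing of $D^6\times D^{n-6}$, and the equivariant smooth tubular neighborhood), both unproblematic because $A_5$ is finite and, in the first case, acts trivially on the factor responsible for the corner; matching the two constructions to the two ranges of $(m,n)$ is a mild bookkeeping point.
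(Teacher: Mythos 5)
Your argument is correct and follows essentially the same route as the paper: the product of the fixed-point-free $6$-disk with a trivially-acted-on $(n-6)$-disk, corners smoothed, for the range $m\ge n+r-6\ge r$, and an $A_5$-invariant smooth tubular neighborhood of the $6$-disk inside $\R^n\subset\R^m$ for the range $m\ge n\ge r$, combined with the observation that an invariant disk with $D^{G_D}=\emptyset$ rules out an equivariant center. The only small caveat is your parenthetical suggestion to taper the tube radius to zero along $\partial D^6$, which as literally stated creates a cusp rather than a smooth disk boundary; the corner-smoothing you already invoke in the first case (or the paper's direct appeal to an invariant smooth tubular neighborhood) is the correct fix.
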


At this point it seems worthwhile to summarize our results for the Euclidean space: 

\begin{cor}
\label{cor: all for Rn}
Any $\mathrm{Iso}(\R^n)$-equivariant center on the set of convex compacta in $\R^n$
extends to a center $\cf\co\mathfrak{D}^{n}(\R^n)\to\R^n$ that
is $\mathrm{Iso}(\R^n)$-equivariant on $\mathfrak{D}^{3}(\R^n)$.
Moreover, if $n=4$, then $\cf$ is $\mathrm{Iso}(\R^4)$-equivariant 
on $\mathfrak{D}^{4}(\R^4)$. If $k$ is sufficiently large, 
then $\mathfrak{D}^{k}(\R^n)$ does not admit an
$\mathrm{Iso}(\R^n)$-equivariant center.
\end{cor}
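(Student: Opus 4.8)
The statement gathers the earlier conclusions, so the plan is to combine Theorem~\ref{thm: intro main} in its relative form with the low-dimensional corollaries for the positive assertions, and to read off the negative assertion from Proposition~\ref{prop: no center}. Set $G=\mathrm{Iso}(\R^n)$, let $A\subseteq\X(\R^n)$ be the set of convex compacta lying in $\X(\R^n)$, equipped with a chosen $G$-equivariant center $\cf_A$ (e.g.\ one of the classical centers), and let $Y\subseteq\X(\R^n)$ be the space of $D$ with $D^{G_D}$ contractible. First I would note that $A$ and each $\mathfrak D^k(\R^n)$ are closed and $G$-invariant in $\X(\R^n)$ (a $C^1$-limit of convex compacta is convex, and a $C^1$-limit of $k$-disks is a $k$-disk), and that $A\cup\mathfrak D^3(\R^n)\subseteq Y$, with also $\mathfrak D^4(\R^4)\subseteq Y$. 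Indeed, for $D\in A$ the group $G_D$ is compact and acts by affine isometries, so $D^{G_D}$ is the intersection of the convex body $D$ with the common affine fixed locus of $G_D$; it is convex and contains the center of mass of $D$, hence is contractible. For $D\in\mathfrak D^3(\R^n)$, $D^{G_D}$ is a disk by the linearity of smooth compact Lie group actions on disks of dimension $\le 3$ (the discussion preceding Corollary~\ref{cor: disks of dim at most 3}). And since $\R^4$ is a Hadamard manifold, the argument of Corollary~\ref{cor: 4-disk main} gives $\mathfrak D^4(\R^4)\subseteq Y$. In particular $\mathfrak D^n(\R^n)\subseteq Y$ whenever $n\le 4$.

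Next I would apply the relative form of Theorem~\ref{thm: intro main} to $G$, the closed set $A$, and $\cf_A$, obtaining a $G$-equivariant center $\cf_Y\co Y\to\R^n$ with $\cf_Y|_A=\cf_A$. If $n\le 4$, then $A\subseteq\mathfrak D^n(\R^n)\subseteq Y$ and $\cf:=\cf_Y|_{\mathfrak D^n(\R^n)}$ is a $G$-equivariant center extending $\cf_A$, which settles both the first sentence and the $n=4$ clause (with full equivariance). If $n\ge 5$, put $B:=A\cup\mathfrak D^3(\R^n)$, a closed $G$-invariant subset of $\X(\R^n)$ contained in $Y$, so that $\cf_Y|_B$ is a $G$-equivariant center on $B$. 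Since every disk is contractible, $\mathfrak D^n(\R^n)$ lies in the space of contractible submanifolds of $\R^n$, and the relative version of the corollary that this space admits a center, applied with the closed subset $B$ and the center $\cf_Y|_B$, produces a center $\cf$ on it agreeing with $\cf_Y$ on $B$. Restricting $\cf$ to $\mathfrak D^n(\R^n)$ gives a center that agrees with $\cf_A$ on $A$ and with the $G$-equivariant map $\cf_Y$ on $\mathfrak D^3(\R^n)\subseteq B$, hence is $\mathrm{Iso}(\R^n)$-equivariant on $\mathfrak D^3(\R^n)$. This proves the first two sentences.

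For the last sentence I would quote Proposition~\ref{prop: no center} with $m=n$: as soon as $n\ge r$ (where $r$ is the dimension given by the Mostow-Palais theorem for the fixed-point-free $A_5$-action on the $6$-disk), there is an orthogonal $A_5$-action on $\R^n$ preserving a smoothly embedded $6$-disk $D$ with $D^{A_5}=\emptyset$. Since $A_5\subseteq G_D$, also $D^{G_D}=\emptyset$, so no $\mathrm{Iso}(\R^n)$-equivariant center exists on any subspace of $\X(\R^n)$ containing $D$; as $D\in\mathfrak D^k(\R^n)$ for every $k\ge 6$, the space $\mathfrak D^k(\R^n)$ admits no $\mathrm{Iso}(\R^n)$-equivariant center for all such $k$.

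Since the mathematical input is quoted from the preceding results, the points needing care are organizational rather than substantive: verifying that $A$, $\mathfrak D^3(\R^n)$ and $\mathfrak D^4(\R^4)$ are closed in $\X(\R^n)$ so the relative theorems apply, confirming the convexity and hence contractibility of $D^{G_D}$ for convex $D$, and tracking which part of $\mathfrak D^n(\R^n)$ retains equivariance. I do not foresee a genuine obstacle.
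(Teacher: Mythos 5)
Your proposal is correct and follows essentially the intended assembly of the paper's results: the relative form of Theorem~\ref{thm: intro main} applied with $G=\mathrm{Iso}(\R^n)$ over the locus of contractible fixed sets (using convexity of $D^{G_D}$ for convex $D$, linearity of actions on disks of dimension $\le 3$, and the Hadamard/Schoenflies argument of Corollary~\ref{cor: 4-disk main} for $\mathfrak D^4(\R^4)$), followed by the trivial-group relative extension of Corollary 1.2 over $A\cup\mathfrak D^3(\R^n)$, with the negative statement read off from Proposition~\ref{prop: no center}. Your explicit caveat $n\ge r$ in the last step is exactly the reading the paper intends (cf.\ the conditions in Proposition~\ref{prop: no center}), so there is nothing to add.
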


The results of this paper belong to 
the subject of \emph{continuous selections of multivalued mappings}, 
an established branch of topology which was
pioneered by Michael~\cite{michael1, michael2, michael60} and extensively
surveyed in~\cite{RepSem-book, RepSem-II, RepSem-III}. 
Indeed, assigning to each $D\in \mathcal X(M)$ the relative interior of $D^{G_D}$ in $M$ yields
a multivalued map $\mathcal X(M)\to M$ which is lower semicontinuous 
%but not continuous
in the sense of~\cite[Definition 0.43]{RepSem-book}.
What we call a $G$-equivariant center is a continuous $G$-equivariant 
selection of this multivalued map over a subset $X$ of $\mathcal X(M)$.
The theory of continuous selections 
implies existence of a non-equivariant center under certain assumptions
on $X\subset\mathcal X(M)$ that tend to come in two flavors: either
every $D\in X$ needs to satisfy a suitable generalized convexity condition,
or $X$ is required to be finite-dimensional (see~\cite{RepSem-book}
for details). 
By contrast, our focus is on equivariant centers, and
 there seems to be no prior work analogous to the results 
of this paper.

The classical centers are
continuous in the Hausdorff topology on the set of convex compacta,
and hence one might expect that this would be a natural topology for $\X(M)$. 
However, in Remark~\ref{rmk: no Hausdorff} we shall see
that Theorem~\ref{thm: intro main} fails when $\X(\R^n)$ is given the Hausdorff
topology. On the other hand, for any $k$ the Hausdorff topology on 
the set of $k$-dimensional convex compacta in $\X(\R^n)$ 
coincides with the $C^0$ topology, and hence we ask: 

\begin{quest}
\label{quest: intro C0 top}
{\rm
Does \textup{Theorem~\ref{thm: intro main}} remain valid when $\X(M)$ is replaced with
the space of $C^0$ submanifolds equipped with the $C^0$ topology?}
\end{quest}

\section{Background on group actions}\label{sec: groups}

Here we 
review the basic facts on Lie group actions used in this work.
Throughout this section $G$ is a Lie group and $X$ is a metrizable space.
A {\em $G$-action\,} on $X$ is a continuous map $a\co G\times X\to G$, written as 
$gx:=a(g,x)$, such that the map $g\to a(g,\cdot )$ 
is a homomorphism between $G$
and the homeomorphism group of $X$. A space with a $G$-action
is a {\em $G$-space}. The {\em orbit space\,} $X/G$
is the set of $G$-orbits with the quotient topology.
A map $f\co X\to Y$ of $G$-spaces is a {\em $G$-map\,} if $f$ is continuous and $f(gx)=gf(x)$ for all $x\in X$, and $g\in G$;
we will also refer to $f$ as {\em $G$-equivariant\,} or just {\em equivariant\,} when $G$ is understood.
For $S\subseteq X$, we let $gS:=\{gx\,:\, x\in S\}$ and use the following notations:
\begin{center}
$G_{\hspace{-.3pt}S}:=\{g\in G\,:\, \text{$gS=S$}\}\,=$ the isotropy subgroup of $S$ in $G$,
\end{center}  
\begin{center}  
$GS:=\{gS\,:\, g\in G\}\,=$ the $G$-orbit of $S$,
\end{center}  
\begin{center}  
$X^G:=\{x\in X\,:\, \text{$gx=x$ for each $g\in G$}\}\,=$
the fixed point set of $G$ in $X$.
\end{center}  
If $S=\{x\}$, then $GS$, $G_S$ are denoted by $Gx$, $G_x$, respectively.
Note that $G_x$ is a closed subgroup of $G$. If $GS=S$, then $S$ is {\em $G$-invariant}.
If $x\in S_x\subseteq X$, then $S_x$ is called a {\em $G_x$-slice at $x$} provided that
$GS_x$ is open in $X$ and there is a $G$-map
$f\co GS_x\to G/G_x$ with $S_x=f^{-1}(G_x)$.

\begin{lem}
\label{lem: preim of slice}
Let $\a\co A\to X$ be a $G$-map of $G$-spaces
with $\a(a)=x$ and $G_a=G_x$. If
$S_x$ is a $G_x$-slice at $x$, then $\a^{-1}(S_x)$
is a $G_a$-slice at $a$.  
\end{lem}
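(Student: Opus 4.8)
The plan is to verify directly that $\a^{-1}(S_x)$ satisfies the three defining properties of a $G_a$-slice at $a$: it contains $a$, its $G$-saturation is open in $A$, and it is the preimage of the trivial coset under a $G$-map into $G/G_a$. Since $\a(a)=x\in S_x$, the membership $a\in\a^{-1}(S_x)$ is immediate, and since $G_a=G_x$ by hypothesis we have $G/G_a=G/G_x$, so the target of the sought slice map is literally the same as the one witnessing that $S_x$ is a $G_x$-slice.

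The key step is the saturation identity $G\,\a^{-1}(S_x)=\a^{-1}(G S_x)$, which uses that $\a$ is a $G$-map: if $\a(y)=gs$ with $g\in G$ and $s\in S_x$, then $\a(g^{-1}y)=g^{-1}\a(y)=s\in S_x$, so $y\in g\,\a^{-1}(S_x)\subseteq G\,\a^{-1}(S_x)$; the reverse inclusion is clear since $\a(gz)=g\,\a(z)\in gS_x$ for $z\in\a^{-1}(S_x)$. Because $G S_x$ is open in $X$ and $\a$ is continuous, $\a^{-1}(G S_x)=G\,\a^{-1}(S_x)$ is open in $A$, which gives the openness requirement; note this set is manifestly $G$-invariant.

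It remains to produce the slice map. Let $f\co G S_x\to G/G_x$ be the $G$-map with $S_x=f^{-1}(G_x)$. I would set $f':=f\circ\a\big|_{G\a^{-1}(S_x)}\co G\,\a^{-1}(S_x)\to G/G_x=G/G_a$, which is well defined precisely because $G\,\a^{-1}(S_x)=\a^{-1}(G S_x)$ lies in the domain of $f\circ\a$, and which is a $G$-map as a composition of $G$-maps. Finally, $(f')^{-1}(G_a)=(f')^{-1}(G_x)=\{\,y\in G\,\a^{-1}(S_x):\a(y)\in f^{-1}(G_x)=S_x\,\}=\a^{-1}(S_x)$, where the last equality uses $\a^{-1}(S_x)\subseteq G\,\a^{-1}(S_x)$. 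Hence $\a^{-1}(S_x)$ is a $G_a$-slice at $a$.

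I do not expect a genuine obstacle here: the argument is pure equivariant bookkeeping. The only points that merit care are the saturation identity $G\,\a^{-1}(S_x)=\a^{-1}(G S_x)$ — this is exactly where $G$-equivariance of $\a$ is invoked — and checking that the composite $f\circ\a$ is taken on the correct (open, $G$-invariant) domain; the hypothesis $G_a=G_x$ enters only to identify the target $G/G_a$ with $G/G_x$ so that $f$ can be reused verbatim.
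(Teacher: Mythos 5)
Your proof is correct and follows essentially the same route as the paper: compose the slice map $f$ with $\a$ to get a $G$-map into $G/G_x=G/G_a$ whose preimage of the trivial coset is $\a^{-1}(S_x)$, and use equivariance of $\a$ to get $G\,\a^{-1}(S_x)=\a^{-1}(GS_x)$, hence openness of the saturation. If anything, you are slightly more careful than the paper's one-line argument about restricting the composite to the open $G$-invariant domain $\a^{-1}(GS_x)$.
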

\begin{proof} 
Note that
$\a\circ f\co A\to G/G_x=G/G_a$ is a $G$-map
with $(\a\circ f)^{-1}(G_a)=\a^{-1}(S_x)$. 
For any $g\in G$, $z\in X$, we have $g\a^{-1}(z)=\a^{-1}(gz)$, 
%($b\in g\a^{-1}(z)$ iff $g^{-1}b\in\a^{-1}(z)$ iff $\a(g^{-1}b)=z$ iff
%$g^{-1}\a(b)=z$ iff $\a(b)=gz$ iff $b\in\a^{-1}(gz)$)
and hence $G\a^{-1}(S_x)=\a^{-1}(GS_x)$. Thus openness of
$GS_x$ implies openness of $G\a^{-1}(S_x)$. 
\end{proof}

A $G$-space $X$ is {\em Palais-proper\,} %, see~\cite[Definition 1.2.2]{Pal-slice}, 
if any $x\in X$ has a neighborhood $V_x$ such
that every $y\in Y$ has a neighborhood $V_y$ for which
$\{g\in G\,:\, gV_x\cap V_y\neq\emptyset\}$ is precompact %has compact closure 
in $G$. For example, if $G$ is compact, then any $G$-space is Palais-proper.
If $G$ is an isometry group of a smooth Riemannian manifold, 
then the $G$-space is Palais-proper~\cite[Theorem I.4.7]{KobNom},
and conversely, any smooth Palais-proper action preserves
a smooth Riemannian  metric~\cite[Theorem 4.3.1]{Pal-slice}.

Note that if $x$ is a point of a Palais-proper $G$-space $X$, then
$G_x$ is compact (because it is closed and precompact in $G$).
%Note that any metric space is completely regular.
A key result established in~\cite[Section 2.3]{Pal-slice} is that
every point $x$ in a Palais-proper $G$-space
is contained in a $G_x$-slice.
In~\cite[Section 2.1]{Pal-slice} one finds the following characterization
of slices at points with compact isotropy subgroups: 

\begin{lem}[\cite{Pal-slice}]
\label{lem: charact of slices}
Let $x\in S\subseteq X$ and suppose that $G_x$ is compact.
Then $S$ is a $G_x$-slice at $x$ if and only if
%By~\cite[Theorem II.4.4]{Bre-book} if $G$ is compact 
the following conditions hold:
\vspace{-3pt}
\begin{itemize}
\item[\textup{(a)}]  $gS\cap S\neq\emptyset$ if and only if $g\in G_x$, \vspace{2.5pt}
\item[\textup{(b)}] $S$ is closed in $GS$, and  $GS$ is open in $X$, \vspace{2pt}
\item[\textup{(c)}] there is an open set $O$ such that 
$S\subset O\subset GS$ and $\{g\in G\,:\, gO\cap O\neq\emptyset\}$
is precompact in $G$.\qed
\end{itemize}
\end{lem}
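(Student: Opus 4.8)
The plan is to prove the two implications of the equivalence directly, in the spirit of Palais. Assume first that $S$ is a $G_x$-slice, so $S=f^{-1}(eG_x)$ for some $G$-map $f\co GS\to G/G_x$ with $GS$ open in $X$. I would use three standard features of $G/G_x$, valid because $G_x$ is a closed (and here compact) subgroup of the Lie group $G$: it is Hausdorff, the projection $G\to G/G_x$ is a locally trivial bundle, and the left $G$-action on $G/G_x$ is Palais-proper. Condition (b) is then immediate, since $GS$ is open by hypothesis and $S=f^{-1}(eG_x)$ is closed in $GS$ because $\{eG_x\}$ is closed. For (a): if $g\in G_x$ then equivariance gives $f(gS)=gf(S)=\{gG_x\}=\{eG_x\}$, so $gS\subseteq S$, hence $gS=S$ and $x\in gS\cap S$; conversely if $gs=s'$ with $s,s'\in S$ then $gG_x=f(gs)=f(s')=eG_x$, so $g\in G_x$. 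For (c): take a relatively compact open neighborhood $W$ of $eG_x$ in $G/G_x$ and set $O:=f^{-1}(W)$; then $O$ is open, $S\subseteq O\subseteq GS$, and $\{g:gO\cap O\neq\emptyset\}\subseteq\{g:g\overline W\cap\overline W\neq\emptyset\}$ is precompact by properness of the action on $G/G_x$.

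For the converse, assume (a)--(c); I must produce a $G$-map $f\co GS\to G/G_x$ with $S=f^{-1}(eG_x)$, the openness of $GS$ being part of (b). The natural candidate is $f(gs):=gG_x$ for $g\in G$, $s\in S$. Condition (a) makes this well defined: if $g_1s_1=g_2s_2$ with $s_i\in S$, then $g_2^{-1}g_1$ carries $s_1\in S$ into $S$, so $g_2^{-1}g_1\in G_x$ and $g_1G_x=g_2G_x$. This $f$ is visibly $G$-equivariant, and---using that (a) also forces $S$ to be $G_x$-invariant---$f^{-1}(eG_x)=G_xS=S$. Everything then comes down to the \emph{continuity} of $f$. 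I would obtain it by proving that the canonical continuous $G$-bijection $u\co G\times_{G_x}S\to GS$, $[g,s]\mapsto gs$, is a homeomorphism: injectivity of $u$ is once more (a), surjectivity and continuity are formal, and $f$ is the composite of $u^{-1}$ with the projection $G\times_{G_x}S\to G/G_x$. To see that $u^{-1}$ is continuous, I would invoke (c) and compactness of $G_x$: fixing $O$ as in (c) and the precompact set $K:=\overline{\{g:gO\cap O\neq\emptyset\}}$, together with a local section of $G\to G/G_x$ near $eG_x$, one constructs a continuous local inverse of $u$ about each point of $GS$, and a point-set argument (here the metrizability of $X$ is used) patches these into a global continuous inverse.

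The hard part, and the only step that is not bookkeeping, is exactly this last one: a continuous equivariant bijection $G\times_{G_x}S\to GS$ need not be a homeomorphism, so producing its inverse genuinely requires the properness-type hypothesis (c). The construction of the local inverses of $u$---combining a local section of $G\to G/G_x$ with the precompact return set $K$---is the technical heart of Palais's slice theory, and in practice I would either reproduce that argument or simply cite~\cite{Pal-slice} for it, since everything else is routine verification.
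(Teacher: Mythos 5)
A preliminary remark on the comparison: the paper gives no proof of this lemma at all --- it is quoted from \cite[Section 2.1]{Pal-slice} and stated without argument --- so your decision to delegate the genuinely hard step (continuity of the inverse of $u\co G\times_{G_x}S\to GS$) to Palais is consistent with the paper's own treatment, and your forward implication (slice $\Rightarrow$ (a)--(c)), using that $G/G_x$ is Hausdorff, that $G\to G/G_x$ is a proper bundle projection for compact $G_x$, and that the $G$-action on $G/G_x$ is proper, is correct and routine.

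The converse, however, has a genuine gap: your claim that \emph{(a) forces $S$ to be $G_x$-invariant} is false, and it is load-bearing. Condition (a) only says that $\{g\in G: gS\cap S\neq\emptyset\}=G_x$; it does not give $gS=S$ for $g\in G_x$. Concretely, let $G=\Z/2$ act on $X=\R$ by $t\mapsto -t$, take $x=0$ (so $G_x=G$ is compact) and $S=(-2,1]$. Then (a) holds (the ``only if'' part is vacuous), $GS=(-2,2)$ is open and $S$ is closed in it, and (c) holds with $O=GS$ since $G$ is finite; yet $S$ is not $G_x$-invariant, and it is not a slice in the sense of the paper's definition, because $G/G_x$ is a point and a slice would have to equal $GS$. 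This shows two things. First, your intended construction breaks down without the invariance: $f(gs)=gG_x$ satisfies $f^{-1}(eG_x)=G_xS\supsetneq S$, and the twisted product $G\times_{G_x}S$ is not even defined, since it requires a $G_x$-action on $S$. Second, the invariance cannot be derived from (a)--(c): in Palais's own formulation the $G_x$-invariance of $S$ is part of the characterization (equivalently, (a) should be read in the stronger form ``$gS=S$ for $g\in G_x$ and $gS\cap S=\emptyset$ otherwise''). With $G_xS=S$ added as a hypothesis --- which is how the lemma is actually used in the paper, e.g.\ the candidate slices in Lemma~\ref{lem: implications of slice}(iv)--(v) are $G_x$-invariant by construction --- your outline goes through, with the continuity of $u^{-1}$ supplied by \cite{Pal-slice} as you propose.
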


The following lemma summarizes what we need to know about slices.

\begin{lem}\label{lem: implications of slice}
If $S_x$ is a $G_x$-slice at $x$ and $G_x$ is compact, then the following statements hold:
\begin{itemize}
\item[\textup{(i)}]
$G_y\subset G_x$ for every $y\in S_x$.\vspace{2pt}
\item[\textup{(ii)}]
If $W$ is open in a slice $S_x$, then $GW$ is open in $X$. \vspace{2pt}
\item[\textup{(iii)}]
The inclusion induced map $S_x/G_x\to X/G$ 
is an open embedding.\vspace{2pt}
% it is a continuous bijection by the strict $G_x$-invariance of $S_x$ and also
% an open map by the above-mentioned ~\cite[Corollary on p.306]{Pal-slice} and
% the defining property of the quotient topology.
\item[\textup{(iv)}]
Any open $G_x$-invariant neighborhood of $x$ in $S_x$ is a $G_x$-slice at $x$. 
\vspace{2pt}
\item[\textup{(v)}]
Any neighborhood of $x$ in $S_x$ contains an open set that is a $G_x$-slice at $x$.
\vspace{2pt}
\item[\textup{(vi)}] For any neighborhood $U$ of the identity in $G$ there
is a neighborhood $V$ of $x$ in $GS$ such that for each $y\in V$ there is $u\in U$
with $u^{-1}G_yu\subseteq G_x$.  
\end{itemize}
\end{lem}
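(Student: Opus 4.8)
The plan is to derive all six statements from the slice characterization in Lemma~\ref{lem: charact of slices} together with one structural device. Write $f\co GS_x\to G/G_x$ for a $G$-map with $S_x=f^{-1}(G_x)$, as in the definition of a slice. Since $G_x$ is a closed subgroup of the Lie group $G$, the projection $G\to G/G_x$ is a locally trivial bundle, so there is an open set $V_0\ni G_x$ in $G/G_x$ and a continuous section $\s_0\co V_0\to G$ with $\s_0(G_x)=e$; for $g_0\in G$ the translate $g_0V_0$ is then an open neighborhood of $g_0G_x$ carrying the continuous section $\s_{g_0}(g_0v):=g_0\s_0(v)$ with $\s_{g_0}(g_0G_x)=g_0$. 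Pulling back along $f$, on the open set $f^{-1}(g_0V_0)\subseteq GS_x$ one obtains a continuous ``local retraction onto the slice'' $r_{g_0}(z):=\s_{g_0}(f(z))^{-1}z$, which lands in $S_x$ (because $f$ is a $G$-map, so $f(r_{g_0}(z))=\s_{g_0}(f(z))^{-1}f(z)=G_x$) and satisfies $z=\s_{g_0}(f(z))\,r_{g_0}(z)$ with $\s_{g_0}(f(z))$ near $g_0$ whenever $f(z)$ is near $g_0G_x$. This device drives (ii) and (vi), the two analytic items; the rest is point-set bookkeeping with Lemma~\ref{lem: charact of slices}.

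Item (i) is immediate: if $y\in S_x$ and $g\in G_y$, then $gy=y\in gS_x\cap S_x$, so $gS_x\cap S_x\neq\emptyset$ and Lemma~\ref{lem: charact of slices}(a) gives $g\in G_x$. For (ii), let $W$ be open in $S_x$ and $y\in GW$, say $y=g_0w_0$ with $w_0\in W$; then $f(y)=g_0G_x$, the neighborhood $f^{-1}(g_0V_0)$ of $y$ is open in $GS_x$ and hence in $X$, and there $r_{g_0}(y)=g_0^{-1}y=w_0\in W$, so $r_{g_0}^{-1}(W)$ is an open neighborhood of $y$; for any $z$ in it, $z=\s_{g_0}(f(z))\,r_{g_0}(z)\in GW$, so $GW$ is open in $X$. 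For (iii), the inclusion-induced map $S_x/G_x\to X/G$ is continuous (it is induced by the $G_x$-invariant continuous composite $S_x\hookrightarrow X\to X/G$) and injective (if $y,y'\in S_x$ with $y'=gy$, then $gS_x\cap S_x\neq\emptyset$, hence $g\in G_x$ by Lemma~\ref{lem: charact of slices}(a), so $y,y'$ lie in one $G_x$-orbit); it is open because every open subset of $S_x/G_x$ is the image of a $G_x$-invariant open $W\subseteq S_x$, whose image in $X/G$ coincides with the image of the $G$-saturation $GW$ and is therefore open since $GW$ is open by (ii) and orbit maps are open.

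For (iv), given an open $G_x$-invariant neighborhood $S'$ of $x$ in $S_x$, I would verify conditions (a)--(c) of Lemma~\ref{lem: charact of slices} for $S'$: (a) holds by $G_x$-invariance for $g\in G_x$, and via $gS'\cap S'\subseteq gS_x\cap S_x$ together with (a) for $S_x$ otherwise; $GS'$ is open by (ii); $S'$ is closed in $GS'$ since a point $z\in GS'$ in the $GS'$-closure of $S'$ lies in $S_x$ (as $S_x$ is closed in $GS_x\supseteq GS'$), so writing $z=gs'$ with $s'\in S'$ yields $z\in gS_x\cap S_x$, hence $g\in G_x$ and $z=gs'\in S'$; and for (c) one intersects the set $O$ furnished by (c) for $S_x$ with the open set $GS'$. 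For (v), replace a neighborhood $U$ of $x$ in $S_x$ by its interior; since $G_x$ is compact and fixes $x$, the tube lemma applied to the open set $\{(g,z)\in G_x\times S_x:gz\in U\}\supseteq G_x\times\{x\}$ yields an open $V\ni x$ with $G_xV\subseteq U$, and $G_xV=\bigcup_{g\in G_x}gV$ is an open $G_x$-invariant neighborhood of $x$, hence a $G_x$-slice at $x$ by (iv). For (vi), for $y$ in the neighborhood $f^{-1}(V_0)$ of $x$ we have $y=\s_0(f(y))\,r_e(y)$ with $r_e(y)\in S_x$, so (i) gives $G_y=\s_0(f(y))\,G_{r_e(y)}\,\s_0(f(y))^{-1}\subseteq\s_0(f(y))\,G_x\,\s_0(f(y))^{-1}$, i.e.\ $u^{-1}G_yu\subseteq G_x$ with $u=\s_0(f(y))$; since $\s_0\circ f$ is continuous with $(\s_0\circ f)(x)=e$, any neighborhood $U$ of the identity contains $\s_0(f(y))$ for all $y$ in a suitable neighborhood $V$ of $x$ in $GS_x$.

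I expect the main obstacle to be (ii): manufacturing, from the bare information that $S_x$ is the $G_x$-preimage of a point under some $G$-map, the local section/retraction that makes $G$-saturations of open subsets of the slice open in $X$. This is the only place genuine input is needed --- namely that $G\to G/G_x$ admits local sections --- and once it is available, (iii)--(v) are routine and (vi) is a one-line continuity argument with the same retraction. As an alternative, one could first establish the twisted-product homeomorphism $GS_x\cong G\times_{G_x}S_x$ and read off (ii)--(vi) from it, but the local-section route keeps the Lie-theoretic input to a minimum and is self-contained.
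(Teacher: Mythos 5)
Your proof is correct, and for items (i), (iii), and (iv) it runs along essentially the same lines as the paper's (injectivity and closedness via Lemma~\ref{lem: charact of slices}(a), openness of $GW$, and verification of conditions (a)--(c) for a $G_x$-invariant open subset). Where you genuinely diverge is precisely at the three places the paper does not argue from scratch: for (ii) the paper just cites Palais (Corollary on p.~306 of \cite{Pal-slice}); for (v) it invokes a $G_x$-invariant metric on $S_x$ from \cite{Pal-mem} so that small metric balls about $x$ are invariant and then applies (iv); and for (vi) it chooses $V$ inside the set $O$ of Lemma~\ref{lem: charact of slices}(c), so that $G_x$ and $G_y$ sit inside a compact subgroup, and quotes Bredon's subconjugacy theorem \cite{Bre-book}. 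You instead construct a local section $\s_0$ of $G\to G/G_x$ (legitimate, since $G$ is a Lie group and $G_x$ is closed, so $G\to G/G_x$ is a locally trivial principal bundle) and the induced local retraction $r_{g_0}(z)=\s_{g_0}(f(z))^{-1}z$ onto the slice; this yields (ii) and (vi) by direct computation ($G_y=uG_su^{-1}$ with $u=\s_0(f(y))\to e$), and you obtain (v) from the tube lemma using compactness of $G_x$ rather than an invariant metric. Both routes are sound: yours is self-contained, makes the local twisted-product structure $GS_x\cong G\times_{G_x}S_x$ explicit, and isolates the single Lie-theoretic input (existence of local cross-sections of $G\to G/G_x$), whereas the paper keeps the proof short by outsourcing exactly those steps to Palais and Bredon, whose proofs rest on the same cross-section device.
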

\begin{proof}
(i) is immediate by Lemma~\ref{lem: charact of slices}(a), while
(ii) is proved in ~\cite[Corollary on p.306]{Pal-slice}. The rest of the items are established as follows:

(iii): The map here is one-to-one by Lemma~\ref{lem: charact of slices}(a). Further, it is
a homeomorphism by (ii) and the defining properties of the quotient topology. 

(iv): Let $W$ be an
open $G_x$-invariant neighborhood of $x$ in $S_x$, and appeal to Lemma~\ref{lem: charact of slices}.
The conditions (a) and (c) are immediate, and (i) implies that $GW$ is open in $X$. 
To see that $W$  is closed in $GW$ 
take $w_i\to gw$, where $g\in G$, $w_i, w\in W$, and note that 
$w, w_i\in S_x$ implies $gw\in S_x$, so $g\in G_x$ and hence $gw\in W$.

(v): Note that $S_x$ has a $G_x$-invariant metric~\cite[Proposition 1.1.12]{Pal-mem},
so any open metric ball  centered at $x$ is $G_x$-invariant, and hence is a slice by (iii).
Any neighborhood of $x$ contains such a ball. 

(vi): Choose $V$ inside $O$ of Lemma~\ref{lem: charact of slices}(c).
Then $G_x$, $G_y$ lie in a compact subgroup of $G$, in which case
a proof can be found in~\cite[Corollary II.5.5]{Bre-book}.
\end{proof}

\section{Space of Submanifolds}
\label{sec: space of sbmfls}

In this section we give a precise definition of the topology on $\X(M)$. Further we  
show that this topology is Palais-proper and induced by a $G$-invariant metric.
For any submanifold $D\in \X(M)$
let $\X_D(M)\subset \X(M)$ be the collection of submanifolds 
which are $C^1$ diffeomorphic to $D$. In other words,
$$
\X_D(M):={\Emb}^1(D, M)/\Diff^1(D),
$$
the space of $C^1$ embeddings of $D$ into $M$ modulo $C^1$ diffeomorphisms of $D$.
We equip $\X_D(M)$ with its standard
$C^1$ topology, which is induced by $C^1(D,M)$, the space of $C^1$ mappings $D\to M$. Thus a pair of submanifolds $A$, $B\in \X_D(M)$ are close
if they admit parametrizations $f$,  $g\in {\Emb}^1(D, M)$ that are $C^1$ close.
Finally we topologize $\X(M)$ as the disjoint union of $\X_D(M)$
where $D$ ranges over 
$C^1$ diffeomorphism classes of submanifolds $D\in \X(M)$. Note that
the obvious $G$-action on $\X(M)$ given by $\a(g, D)=gD$ is effective.

\begin{lem}
\label{lem: properties of XkD(M)}
The following statements hold:
\begin{enumerate}
\item The topology on $\X_D(M)$ is induced by a $G$-invariant metric $d_D$.
\item The $G$-space $\X_D(M)$ is Palais-proper.
\item The orbit space $\X_D(M)/G$ is metrizable.
\end{enumerate}
\end{lem}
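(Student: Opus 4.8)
The plan is to prove the three assertions in sequence, with the first (existence of a $G$-invariant metric) carrying most of the content and the other two following from it together with the characterization of Palais-proper actions recalled in Section~\ref{sec: groups}.

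First I would fix a $C^1$ representative $D$ and work with the space $\mathrm{Emb}^1(D,M)$ of $C^1$ embeddings, equipped with a metric $\rho$ that induces the $C^1$ topology: cover $D$ by finitely many compact coordinate charts, and for embeddings $f,g$ measure the supremum over these charts of the distance between the $0$-jets plus the distance between the $1$-jets, using an auxiliary complete background metric on $M$ and its Levi-Civita connection to make sense of comparing differentials at nearby points. Since $G$ acts on $M$ by isometries, it acts on $\mathrm{Emb}^1(D,M)$ by post-composition, $g\cdot f = g\circ f$, and because $g$ is a Riemannian isometry (hence an affine map preserving the connection), this action preserves $\rho$ up to the obvious discrepancy coming from $g$ moving basepoints — which is precisely why one should symmetrize. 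So I would set $\rho_G(f,g) := \sup_{h\in G} \rho(h f, h g)$; this is a genuine metric (finiteness needs a short argument using properness of the $G$-action on $M$, or one can instead average/inf over a slice), it is $G$-invariant by construction, and it still induces the $C^1$ topology because $\rho \le \rho_G$ and, locally, $\rho_G$ is controlled by $\rho$ since isometries of $M$ act equicontinuously on jets over a fixed compact set. Passing to the quotient $\X_D(M) = \mathrm{Emb}^1(D,M)/\Diff^1(D)$, I would then define $d_D([f],[g]) := \inf_{\phi,\psi\in\Diff^1(D)} \rho_G(f\circ\phi, g\circ\psi)$; the reparametrization group acts on the source and commutes with the $G$-action on the target, so $d_D$ is still $G$-invariant, it is a pseudometric for the usual reasons, and it is a genuine metric and induces the quotient ($=C^1$) topology because $\Diff^1(D)$ acts properly on $\mathrm{Emb}^1(D,M)$, so orbits are closed and the infimum is attained locally. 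This proves (1).

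For (2), I would invoke the fact quoted in Section~\ref{sec: groups} that the isometry group of a smooth Riemannian manifold acts Palais-properly; more directly, since by (1) the $G$-action on $\X_D(M)$ is by isometries of the metric $d_D$, and $G$ sits inside $\Iso(M)$ which acts properly on $M$, properness transfers: given $A\in\X_D(M)$ with parametrization $f$, pick a point $p\in f(D)$, a precompact neighborhood $V_p$ of $p$ in $M$ witnessing properness of the $G$-action on $M$, and let $V_A$ be the $d_D$-ball of embeddings whose images stay $C^0$-close to $f(D)$; then $gV_A\cap V_B\ne\emptyset$ forces $g$ to move a point of $V_p$ into a fixed compact set, so the relevant subset of $G$ is precompact. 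Alternatively, one cites~\cite[Theorem 4.3.1]{Pal-slice} backwards: a smooth Palais-proper action preserves a metric, and we have just produced an invariant metric. Finally (3) is immediate: the orbit space of a metrizable Palais-proper $G$-space is metrizable — indeed $d_D$ descends to the Hausdorff-type distance $d_D(Gx,Gy):=\inf_{a\in Gx,\,b\in Gy} d_D(a,b)$, which is a metric on $\X_D(M)/G$ inducing the quotient topology precisely because the action is proper (orbits are closed and locally the infimum is realized), so no separate argument is needed beyond quoting the slice theorem.

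The main obstacle I anticipate is purely in part (1), and specifically in two related spots: checking that $\rho_G$ (or whichever symmetrized/inf'd metric one uses) is finite and still induces the $C^1$ topology rather than something coarser or finer. Finiteness can fail naively because $\sup_{h\in G}\rho(hf,hg)$ might be infinite when $G$ is noncompact and $f,g$ are far apart in $M$; the fix is to work locally — $\X_D(M)$ need only be \emph{locally} metrized by an invariant metric and then one patches, or one replaces the sup by a sup over a precompact slice neighborhood and argues that the resulting local metrics are compatible. Showing $\rho_G$ induces the $C^1$ topology reduces to the equicontinuity statement that, over any fixed compact subset of $M$, the family of isometries of $M$ has uniformly bounded effect on $1$-jets in the relevant norm — this is where one genuinely uses that elements of $G$ are Riemannian isometries (so their differentials are orthogonal and they preserve the connection), and it is the one estimate I would write out carefully; everything else is bookkeeping with the slice theorem of Section~\ref{sec: groups}.
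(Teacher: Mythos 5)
Your parts (2) and (3) track the paper closely: Palais-properness is transferred from the $G$-action on $M$ exactly as in the paper (if a $d_D$-ball around $A$ meets a translate of a $d_D$-ball around $B$, then $g$ carries a compact set near $B$ into a compact neighborhood of $A$, and properness on $M$ gives precompactness), and metrizability of $\X_D(M)/G$ is obtained, as in the paper, by quoting Palais with the metric $\inf_{g\in G} d_D(x,gy)$. One caveat: your ``alternative'' argument for (2), citing \cite[Theorem 4.3.1]{Pal-slice} ``backwards,'' is logically invalid --- that theorem says Palais-proper smooth actions preserve a metric, and the converse is false (an action by isometries need not be proper), so only your direct transfer argument stands.

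The genuine gap is in (1), and you have correctly identified where it is but not closed it. Your invariant metric is $\rho_G(f,g)=\sup_{h\in G}\rho(hf,hg)$, which can be infinite for noncompact $G$, and the proposed repairs --- ``work locally and patch,'' or take the supremum over a precompact slice neighborhood --- are precisely the content of the lemma left unproved: producing a single global $G$-invariant metric on $\X_D(M)$ inducing the $C^1$ topology is the whole point of (1), and patching local invariant metrics into such a global one is not bookkeeping (nor do you cite a general theorem on invariant metrics for proper actions that would do it for you). The paper avoids this issue entirely by a different device: it fixes a smooth $G$-equivariant embedding of $M$ into a Hilbert space $\mathcal H$ carrying an \emph{orthogonal} $G$-action \cite{Kaa-emb}, and defines $d_D(A,B)$ as the infimum of $\|\a-\b\|_{C^1}$ over parametrizations $\a,\b\in C^1(D,\mathcal H)$ of $A$, $B$; invariance is then automatic because $G$ preserves the Hilbert norm, with no supremum over $G$ and no finiteness problem, and non-degeneracy follows at once since $d_D(A,B)$ dominates the distance from any point of $A\setminus B$ to $B$ (your route to non-degeneracy via properness of the $\Diff^1(D)$-action and ``local attainment of the infimum'' is also not substantiated). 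A second, softer issue: your worry about ``discrepancy from moving basepoints'' stems from measuring $1$-jets in non-invariant chart data; if instead you measured them using the $G$-invariant Riemannian metric on $M$ (or its induced structure on $TM$), no symmetrization over $G$ would be needed --- but as written, your construction neither does that nor completes the sup/patching route, so statement (1), and with it the citation underlying (3), is not established.
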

\begin{proof}
A convenient way to describe $\X_D(M)$ is to consider 
a smooth $G$-equivariant embedding of $M$ into a Hilbert space $\mathcal H$ 
equipped with some orthogonal $G$-action, see~\cite[Theorem 0.1]{Kaa-emb}.
In general, one cannot equivariantly embed $M$ into a
finite dimensional linear space; this can be done, e.g., 
if both $G$ and $M$ are compact~\cite[Theorem VI.4.1]{Bre-book}.

To prove (1) we define a $G$-invariant metric $d_D$ on $\X_D(\mathcal H)$,
and then restrict it to a smoothly embedded $G$-invariant copy of $M$ in $\mathcal H$.
Define $d_D(A,B)$ as the infimum of 
$\|\a-\b\|_{C^1}$ taken over all $C^1$ embeddings
$\a, \b\in C^1(D,\mathcal H)$ with images $A$, $B$, respectively.
Here the $C^1$ norm is computed using the norm on $\mathcal H$. 
The triangle inequality follows from the one for $\mathcal H$
and properties of the infimum, and the $G$-invariance holds because
the $G$-action on $\mathcal H$ preserves the norm. 
For non-degeneracy note that if say $a\in A\setminus B$,
then $d_D(A,B)$ is bounded below by the distance from $a$ to $B$,
hence $d_D(A,B)=0$ implies $A=B$.

To prove (2) fix $r\in (0,1)$ and let $V_x$ be
the $r$-ball about $x$ in $(\X_D(M), d)$. 
If $A, B\in \X_D(M)$ with $V_A\cap g V_B\neq\emptyset$, 
then $gB$ lies in the $3r$-neighborhood of $A$, and since
the $G$-action on $M$ is Palais-proper, so is the $G$-action on $\X(M)$.

(3) is proved in~\cite[Theorem 4.3.4]{Pal-slice} assuming (1)--(2) with the metric given by
${\bar d}_D(Gx, Gy)=\inf\{d_D(x,gy)\,:\,g\in G\}$.
% namely, the triangle inequality for $\bar d$
%follows from $G$-invariance of $d$, and non-degeneracy of $\bar d$
%comes from the fact that in Palais-proper spaces orbits are closed,
%and one can also 
\end{proof}

\begin{rmk}
\label{rmk: properties of Xk(M)}
The proof of Lemma~\ref{lem: properties of XkD(M)} works for $\X(M)$ in place of $\X_D(M)$
by replacing  $d_D$ with $d(A,B)=\min(1, d_D(A,B))$ if $A$, $B\in\X_D(M)$ for some $D$, 
and $d(A,B)=1$ otherwise. 
\end{rmk}

\begin{rmk}
\label{rmk: properties of Xk0M)}
The proof of Lemma~\ref{lem: properties of XkD(M)} goes through as written
if we give $\X(M)$ the $C^k$ topology, where $k$ is any nonnegative integer.
\end{rmk}

\begin{rmk}
\label{rmk: no Hausdorff}
Let $G$ be any group of isometries of $\R^2$ that contains a reflection $r$
in the $y$-axis, as well as a (nontrivial) 
rotation about the origin. 
Let us justify the claim made before Question~\ref{quest: intro C0 top} that 
there is no Hausdorff continuous $G$-equivariant center defined on the set of 
$2$-disks in $\X(\R^2)$.
The unit disk $D^2\subset\R^2$ is Hausdorff close
to a smoothly embedded  $2$-disk $D$ obtained by removing
from $D^2$ a small $r$-invariant
neighborhood of the segment $\{(0, y)\in D^2\,:\, y\ge -\frac{1}{2}\}$. 
Since $G$ contains $r$, any $G$-equivariant center of $D$ is contained in
$\{(0, y)\in D^2\,:\, y<-\frac{1}{2}\}$, while the $G$-invariant center
of $D^2$ is the origin because $G$ contains a rotation.
\end{rmk}

\section{Proof of Theorem~\ref{thm: intro main}}
\label{sec: the proof}

In this section $M$, $G$ are as in Section~\ref{sec: intro}. 
Thus  $M^G$ is a smooth submanifold of $M$, and if $D$
is a compact subset of $M$, then $G_D$ is a compact subgroup of $G$. Also
$D^{G_D}=D\cap M^{G_D}$.
By {\em small\,} we mean close to the identity.

\begin{lem}
\label{lem: rel Palais}
Any $D\in\X(M)$ has a compatible smooth
atlas $\a$ such that the given $G_D$-action on 
$(D,\a)$ can be  $C^1$ approximated by a smooth $G_D$-action,
and moreover, these two actions are conjugate by a $C^1$
diffeomorphism that is $C^1$ close to the identity.
\end{lem}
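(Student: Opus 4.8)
The plan is to realize the given $C^1$ action $a$ of the compact group $G_D$ on $D$ as a $C^1$-small perturbation of a genuinely smooth action, with respect to a compatible smooth structure coming from an equivariant embedding, and then to invoke the theorem that $C^1$-close actions of a compact Lie group are conjugate by a $C^1$-small diffeomorphism (Grove--Karcher, \emph{How to conjugate $C^1$-close group actions}; Palais, \emph{Equivalence of nearby differentiable actions of a compact group}).

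First I would produce an equivariant $C^1$ embedding of $D$ into a representation and use it to fix $\a$. Fixing any $C^1$ embedding $F\co D\to\R^m$ and using compactness of $G_D$, the map $\Phi\co D\to L^2(G_D,\R^m)$, $\Phi(x)(g)=F(g^{-1}x)$, is a $G_D$-equivariant $C^1$ embedding for the left regular representation (it recovers $F$ upon evaluation at the identity); composing with the orthogonal projection onto a large enough finite-dimensional $G_D$-invariant subspace (Peter--Weyl) yields a $G_D$-equivariant $C^1$ embedding $\iota\co D\to V$ into an orthogonal $G_D$-representation, whose image $D_0:=\iota(D)$ is a compact $G_D$-invariant $C^1$ submanifold of $V$. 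The normal bundle of $D_0$ for the invariant inner product, followed by $v\mapsto p+v$, furnishes a $G_D$-invariant $C^1$ tubular neighborhood $\pi\co U\to D_0$. Now $C^1$-approximate $D_0$ by a smooth compact submanifold $S\subset U$; for $S$ close enough, $q:=\pi|_S\co S\to D_0$ is a $C^1$ diffeomorphism $C^1$-close to the inclusion. Let $\a$ be the smooth structure on $D$ pulled back via $\psi:=q^{-1}\circ\iota\co D\to S$, which is compatible because $\psi$ is a $C^1$ diffeomorphism; under $\psi$ the given action becomes the $C^1$ action $\theta_g:=\psi\circ a_g\circ\psi^{-1}$ on the smooth manifold $S$, and equivariance of $\iota$ gives $\theta_g=q^{-1}\circ(g\,{\cdot}\,)\circ q$ for the linear action $g\,{\cdot}\,$ on $V$. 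Fixing a smooth tubular retraction $\pi_S\co W\to S$ and using that $S$ lies close to the \emph{invariant} submanifold $D_0$, the formula $\sigma^0_g(s):=\pi_S(g\,{\cdot}\,s)$ defines a smooth map $G_D\times S\to S$ with $\sigma^0_e=\id$ that is $C^1$-close to $\theta$ uniformly in $g$ — but which is only an approximate action.

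The main obstacle is to correct $\sigma^0$ to a genuine smooth $G_D$-action $\sigma$ on $S$ that stays $C^1$-close to $\theta$. For each $g$ the smooth family $h\mapsto\sigma^0_{gh}\circ(\sigma^0_h)^{-1}$ of diffeomorphisms is $C^1$-close to the constant $\sigma^0_g$, so one can average it over $G_D$ against Haar measure using the Riemannian center of mass (which depends smoothly on a $C^1$-small family and returns the common value of a constant family). This averaging reproduces any genuine action and, by translation-invariance of Haar measure, strictly decreases the cocycle defect, so iterating it converges to a genuine smooth action $\sigma$, while the center-of-mass estimates keep $\sigma$ smooth and $C^1$-close to $\sigma^0$, hence to $\theta$. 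The delicate points are to make all these estimates uniform over $g\in G_D$ and to control the interaction between the group averaging and the center-of-mass bounds; this is essentially the same circle of ideas as in the Grove--Karcher conjugation technique. Transporting $\sigma$ back by $\psi^{-1}$ yields a smooth $G_D$-action on $(D,\a)$ that $C^1$-approximates $a$.

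Finally, the given $C^1$ action on $(D,\a)$ and the constructed smooth action are $C^1$-close actions of the compact Lie group $G_D$, hence conjugate by a $C^1$ diffeomorphism $C^1$-close to the identity by the quoted result, which completes the proof.
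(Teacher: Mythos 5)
Your plan hinges on the step where the smooth \emph{approximate} action $\sigma^0_g=\pi_S\circ(g\,\cdot\,)$ is corrected to a genuine smooth action $C^1$-close to it, and this is exactly where the proposal has a gap: that correction is essentially the content of the theorem the paper quotes at this point, namely \cite[Theorem C]{Pal-C1-equiv-Cinfty} ($C^1$ actions of compact Lie groups are $C^1$-equivalent to nearby smooth actions), so you are in effect reproving the hard result rather than using it, and the sketch offered does not carry the weight. Averaging the defect maps $h\mapsto\sigma^0_{gh}\circ(\sigma^0_h)^{-1}$ by a Riemannian center of mass and iterating is a Newton-type scheme with values in the diffeomorphism group, not in a compact Lie group as in Grove--Karcher--Ruh: there is no invariant metric available (because $\sigma^0$ is not an action), the claimed strict decrease of the cocycle defect is precisely the estimate that must be proved, and, crucially, there is a loss-of-derivatives problem -- even if one established convergence of the iteration in $C^1$, the limit would a priori be only a $C^1$ action, not a $C^\infty$ one, so the scheme as described does not produce the object the lemma requires; making it work would need uniform control of all higher derivatives along the iteration (a Nash--Moser type argument), none of which is indicated. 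A smaller technical point in the same vein: the nearest-point projection onto a merely $C^1$ submanifold $D_0$ need not be defined on any uniform neighborhood (positive reach can fail below $C^{1,1}$), so the asserted $G_D$-invariant $C^1$ tubular neighborhood of $D_0$ needs a different construction.

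Separately, you never address $\partial D\neq\emptyset$, which is the main case here since the $D$ of interest are disks, and it is where most of the paper's own proof is spent. The smooth approximation $S$ of the manifold-with-boundary $D_0$, the tubular retraction $\pi_S$, the definition and estimates for $\sigma^0$ near $\partial S$, and the nearby-actions conjugation theorem you invoke at the end are all used as if $D$ were closed. The paper reduces to the closed case by doubling the $C^1$ action along $\partial D$, conjugating by a small $C^1$ isotopy near $\partial D$ to smooth the double equivariantly, applying \cite[Theorem C]{Pal-C1-equiv-Cinfty} to the double, and then conjugating by a small diffeomorphism so that the resulting smooth action stabilizes $D$; some such boundary reduction is indispensable in your approach as well. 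In short, the overall strategy (equivariant embedding into a representation, smooth approximation, conjugation of nearby actions) is reasonable, but as written the proof both omits the boundary case and replaces the key cited theorem by an unproved iteration argument.
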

\begin{proof}
The existence of a compatible smooth atlas is proved in~\cite[Theorem 3.6, Chapter 2]{Hir-book}.
If $D$ is a closed manifold, the other claims are contained 
in~\cite[Theorem C]{Pal-C1-equiv-Cinfty}. If $D$ has boundary,
we first double the given $C^1$ actions of $G_D$
along $\d D$. To equivariantly
smooth the double at $\d D$ we may have to adjust the action, 
conjugating it by a small $C^1$ isotopy near $\d D$. Then we apply~\cite[Theorem C]{Pal-C1-equiv-Cinfty}
to the double to approximate the $C^1$ action by a $C^1$ equivalent smooth action.
This action stabilizes a copy of $D$ that is $C^1$ close to $D$, so after a conjugation 
by a small smooth diffeomorphism it can be made to stabilize $D$ inside the double. 
\end{proof}

\begin{lem} 
\label{lem: fixed point set is C1}
Let $D\in\X(M)$ and suppose $D^{G_D}\neq\emptyset$. Then
$D^{G_D}$ is a properly embedded $C^1$ submanifold of 
$D$ that intersects the relative interior
of $D$ and is transverse to $\d D$. 
\end{lem}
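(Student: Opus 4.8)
The plan is to reduce to a statement about smooth actions and then apply a standard fact: the fixed point set of a smooth compact Lie group action on a smooth manifold is a smooth submanifold, properly embedded, and this survives the $C^1$ perturbation. Concretely, first I would invoke Lemma~\ref{lem: rel Palais} to replace the given $C^1$ action of the compact group $G_D$ on $D$ by a smooth action on $(D,\a)$ that is conjugate to it by a $C^1$ diffeomorphism $\phi$ of $D$ close to the identity. Since $\phi$ is a $C^1$ diffeomorphism, $D^{G_D}$ (with respect to the original action) is the $\phi$-image of the fixed point set of the smoothed action, so it suffices to prove the assertion for the smooth action; being a properly embedded $C^1$ submanifold, intersecting the relative interior, and transversality to $\d D$ are all preserved under $\phi$.

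Second, for the smooth $G_D$-action on the smooth manifold $(D,\a)$, I would use the classical result (Bredon~\cite[Chapter VI]{Bre-book}) that the fixed point set of a smooth action of a compact Lie group on a smooth manifold is a smooth, closed (hence properly embedded) submanifold: near any fixed point the action is linearized in an equivariant chart by Bochner's theorem / the slice theorem, and the fixed point set is locally the fixed linear subspace of the isotropy representation, which is a linear subspace of the tangent space; in particular, near a boundary fixed point, working in a half-space model $\R_{\ge 0}\times\R^{m-1}$ with $G_D$ acting linearly and preserving the boundary hyperplane, the fixed set is a linear subspace meeting $\{0\}\times\R^{m-1}$ transversally, which gives transversality of $D^{G_D}$ to $\d D$ and shows the interior of $D^{G_D}$ is nonempty whenever $D^{G_D}\neq\emptyset$. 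The hypothesis $D^{G_D}\neq\emptyset$ guarantees there is at least one such linearizing chart to work in.

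Third, I would address the claim that $D^{G_D}$ meets the \emph{relative} interior of $D$, not merely that it is nonempty. Here the point is that $\d D$ is $G_D$-invariant, so $(\d D)^{G_D}=D^{G_D}\cap\d D$ is itself the fixed point set of a smooth action on the closed manifold $\d D$, hence a proper submanifold of $\d D$; by transversality of $D^{G_D}$ to $\d D$, the component of $D^{G_D}$ through any boundary fixed point has strictly larger dimension than its intersection with $\d D$ and thus extends into the interior. If instead $D^{G_D}$ is disjoint from $\d D$ it trivially lies in the relative interior. Either way the conclusion follows. Finally, transferring back via $\phi$: since $M^{G_D}$ is a smooth submanifold of $M$ and $D^{G_D}=D\cap M^{G_D}$, and since $D$ itself is only $C^1$, the ambient regularity of $D^{G_D}$ inside $D$ is exactly $C^1$, which matches the statement.

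I expect the main obstacle to be the boundary case — ensuring the smoothing of Lemma~\ref{lem: rel Palais} is compatible with the manifold-with-boundary structure and that the resulting fixed point set is genuinely \emph{transverse} to $\d D$ rather than merely disjoint from it or tangent along it. This is handled by the doubling trick already built into Lemma~\ref{lem: rel Palais}: doubling $D$ along $\d D$ produces a closed manifold $DD$ with a smooth $G_D$-action in which $\d D$ is the fixed set of the mirror involution, hence a smooth invariant hypersurface; the fixed set of $G_D$ in $DD$ is a smooth submanifold that is automatically transverse to (in fact intersects cleanly with) the invariant hypersurface $\d D$ by the local linear model, and intersecting with one half recovers $D^{G_D}$ as a properly embedded submanifold of $D$ transverse to the boundary. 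Everything else is a routine application of the slice theorem plus the naturality of these properties under the $C^1$ conjugacy $\phi$.
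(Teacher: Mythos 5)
Your proof is correct and follows essentially the same route as the paper: smooth the $G_D$-action via Lemma~\ref{lem: rel Palais} and the double of $D$, identify the fixed set of the smoothed action as a smooth submanifold that meets $\d D$ transversally by a local analysis at boundary fixed points, and transfer everything back through the small $C^1$ conjugacy (the paper phrases the boundary step with an invariant metric and the totally geodesic fixed set containing the geodesic orthogonal to $\d D$, which is the same content as your linear half-space model). The only point worth making explicit is that transversality at a boundary fixed point uses that $G_D$ preserves the half-space (each copy of $D$ in the double), so by compactness it acts trivially on the normal line and the fixed subspace contains the inward normal; invariance of the boundary hyperplane alone would not suffice.
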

\begin{proof}
In the compatible atlas of Lemma~\ref{lem: rel Palais} the $G_D$-action on $D$ is 
near a smooth action, and the two actions are conjugate by a small $C^1$ diffeomorphism.
Let $\phi$ denote the smooth $G_D$-action on the double of $D$, and fix a Riemannian metric invariant
under $\phi$. Its fixed point set  is a smooth compact boundaryless submanifold 
which is totally geodesic in the metric. 
Since $\d D$ is $\phi$-invariant if $\phi$ fixes $x\in \d D$, then it also fixes every 
point in a convex neighborhood of $x$
that lies on the geodesic through $x$ that is orthogonal to $\d D$.
By assumption $G_D$ fixes a point of $D$, and hence so does $\phi$. It follows that
the fixed point of $\phi$  intersects the relative interior
of $D$ and is transverse to $\d D$. Hence the same is true for the given $G_D$ action
on the double, and the claim follows.
\end{proof}

\begin{rmk}
Lemma~\ref{lem: fixed point set is C1} is immediate by transversality
if $\dim(D)=\dim(M)$. In general, the intersection of $\d D$ and $D^{G_D}$
is not transverse, e.g., a round disk $D$ in the $xy$-plane in $\R^3$
is invariant under  rotation by $\pi$ about the $x$-axis,
which would be disjoint from $\d D$ if they met transversely.
\end{rmk}

We partition $\X(M)$ as $\bigcup_{k, l}\D^{k, l}$ where $\D^{k,l}(M)$ is the set of
all $D\in\X(M)$ such that $D^{G_D}$ is a $k$-dimensional manifold with $l$ connected components.

\begin{rmk}
Even when $D$ is a disk, the manifold $D^{G_D}$ may have any finite number of components, e.g.,
if $G$ is connected and nonabelian and $F$ is any finite CW complex,
then there is a smooth $G$-action on some high-dimensional disk $D$
such that $D^{G_D}$ is homotopy equivalent to $F$, see~\cite[Theorems 3 and 5]{Oli-MathZ}.
On the other hand, we shall see in Appendix~\ref{sec: actions on D4} that if $D$ is a disk of dimension $\le 4$,  
then $D^{G_D}$ is acyclic, and in particular, connected.
\end{rmk}

Henceforth, we need the following notion of a tubular neighborhood of a $C^1$ submanifold $N$ 
of a Riemannian manifold $M$. By~\cite[Thm. 3.6, Chap 2]{Hir-book} 
there is a $C^1$ diffeomorphism $\phi\colon M\to M$ such that 
$\widetilde N:=\phi(N)$ is a $C^\infty$ submanifold of $M$. 
If $\widetilde U$ is a Riemannian tubular neighborhood of $\widetilde N$ 
(produced via the normal exponential map), then
we call $U:=\phi^{-1}(\widetilde U)$ a {\em tubular neighborhood\,} of $N$, and 
furthermore, if $\tilde\pi\colon \widetilde U\to \widetilde N$ is the nearest point projection,
we call $\pi:=\phi^{-1}\circ\widetilde\pi\circ\phi\colon U\to N$ 
the \emph{normal bundle projection} of $U$.

\begin{lem} 
\label{lem: fixed pt set contin}
The map $\D^{k,l}(M)\to\X(M)$
given by $D\to D^{G_D}$ is continuous.
\end{lem}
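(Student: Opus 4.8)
The plan is to show continuity of $D\mapsto D^{G_D}$ at a fixed $D_0\in\D^{k,l}(M)$ by producing, for every $D$ near $D_0$, a $C^1$ parametrization of $D^{G_D}$ that depends $C^1$-continuously on the data. First I would reduce to the model situation supplied by Lemma~\ref{lem: rel Palais}: after replacing $D_0$ by a $C^1$-close copy and conjugating by a small $C^1$ diffeomorphism, the $G_{D_0}$-action on $D_0$ is a genuine smooth action, and by Lemma~\ref{lem: fixed point set is C1} its fixed-point set $N_0:=D_0^{G_{D_0}}$ is a properly embedded $C^1$ (indeed, in the smoothed picture, smooth and totally geodesic) submanifold meeting $\Int D_0$ and transverse to $\partial D_0$. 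The key continuity input is that the isotropy group is locally constant in the relevant sense: for $D$ in a small neighborhood of $D_0$ in $\X(M)$, after applying the slice machinery of Section~\ref{sec: groups} (Lemma~\ref{lem: implications of slice}, especially (i) and (vi)) to the $G$-space $\X(M)$ at $D_0$, one may conjugate $G_D$ by a small element of $G$ so that $G_D\subseteq G_{D_0}$; and since $D$ and $D_0$ lie in the same component $\D^{k,l}(M)$, the fixed-point set $D^{G_D}$ has the same dimension $k$ and the same number $l$ of components as $N_0$, which forces $G_D$ to fix a neighborhood-sized piece rather than a smaller stratum — in effect $D^{G_D}=D\cap M^{G_{D_0}}$ up to the small conjugation.

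Granting that, the second step is to upgrade this set-theoretic comparison to $C^1$-closeness of the submanifolds. I would fix a tubular neighborhood $U$ of the $C^1$ submanifold $M^{G_{D_0}}$ in $M$, with normal bundle projection $\pi\colon U\to M^{G_{D_0}}$, in the sense defined just before the lemma. For $D$ close to $D_0$, choose parametrizations $f\colon D_0\to M$, $f_0=\mathrm{incl}$, with $f$ $C^1$-close to $f_0$ (possible by definition of the $C^1$ topology on $\X_{D_0}(M)$, using that $D$ and $D_0$ are $C^1$-diffeomorphic once we are inside one $\D^{k,l}$-stratum). Then $f^{-1}(M^{G_{D_0}})$ is, by transversality of $f_0(D_0)=D_0$ to $M^{G_{D_0}}$ along $N_0$ and the openness of transversality in the $C^1$ topology, a properly embedded $C^1$ submanifold of $D_0$ that is $C^1$-close to $N_0=f_0^{-1}(M^{G_{D_0}})$; this is the standard fact that transverse preimages vary continuously, applied here in the $C^1$ category and near the compact set $N_0$. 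Pushing forward by $f$, and then undoing the small ambient conjugation from Step~1, shows that $D^{G_D}$ is $C^1$-close to $D_0^{G_{D_0}}$, i.e. the map $D\mapsto D^{G_D}$ is continuous into $\X(M)$ (landing in the component $\X_{N_0}(M)$ since dimension and component count are locked by membership in $\D^{k,l}$).

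The main obstacle I expect is the first step: making precise and uniform the claim that for all $D$ near $D_0$ one has $D^{G_D}=D\cap M^{G_{D_0}}$ after a small conjugation. The subtlety is that a priori $G_D$ could be a proper subgroup of (a conjugate of) $G_{D_0}$, and then $D^{G_D}\supseteq D\cap M^{G_{D_0}}$ could be strictly larger; it is precisely the hypothesis $D\in\D^{k,l}(M)$ — same fixed-point dimension and component count as $D_0$ — that must be invoked to rule this out, together with the observation that $M^{G_D}\supseteq M^{G_{D_0}}$ and that, near the compact submanifold $N_0$, the only $k$-dimensional, $l$-component $C^1$ submanifold of $D$ lying in $M^{G_D}$ and $C^1$-close to $N_0$ is $D\cap M^{G_{D_0}}$ itself. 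Handling this cleanly requires combining the slice description of a neighborhood of $D_0$ in $\X(M)$ with a compactness/transversality argument on $M$ along $N_0$; everything else (choice of tubular neighborhood, continuity of transverse preimages, and transporting back along small diffeomorphisms of $M$ and small elements of $G$) is routine.
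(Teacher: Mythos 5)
Your reduction in Step 1 and the transversality mechanism in Step 2 both break, and the $(k,l)$ hypothesis cannot repair them in the way you propose. Here is a concrete example. Let $M=\R^4$ with coordinates $(x,y,z,w)$, let $G\cong\Z/2\times\Z/2$ be generated by the reflections $\sigma(x,y,z,w)=(x,y,-z,w)$ and $\tau(x,y,z,w)=(x,y,z,-w)$, and let $D_0$ be the unit disk in the plane $\{z=w=0\}$. Then $G_{D_0}=G$, $M^{G_{D_0}}=\{z=w=0\}$, and $N_0=D_0^{G_{D_0}}=D_0$, so $D_0\in\D^{2,1}(\R^4)$. Now let $D$ be the graph $\{(x,y,0,\varepsilon f(x,y))\}$ over $D_0$, where $f$ is a nonzero bump function supported in the interior. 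For small $\varepsilon$ this $D$ is $C^1$ close to $D_0$, $G_D=\langle\sigma\rangle\subseteq G_{D_0}$ (no conjugation even needed), and $D^{G_D}=D\cap\{z=0\}=D$, so again $D\in\D^{2,1}(\R^4)$. But $D\cap M^{G_{D_0}}=\{(x,y,0,0):f(x,y)=0\}$, which is a proper subset of $D^{G_D}$ and not even a submanifold of the right kind; so your key identification ``$D^{G_D}=D\cap M^{G_{D_0}}$ up to small conjugation'' is false, even though the $(k,l)$ count matches and the conclusion of the lemma holds (here $D^{G_D}=D$ is indeed close to $N_0=D_0$, but for a reason your argument does not capture). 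The same example shows the transversality you invoke fails: $\dim D_0+\dim M^{G_{D_0}}-\dim M=0\neq 2=\dim N_0$, so $D_0$ meets $M^{G_{D_0}}$ cleanly but not transversely (compare the remark after Lemma~\ref{lem: fixed point set is C1}, where a disk in the $xy$-plane of $\R^3$ meets the fixed axis of a $\pi$-rotation in a $1$-dimensional set). Hence ``openness of transversality'' and ``continuity of transverse preimages'' cannot be applied, and $C^1$ proximity of the intersections does not follow.

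The missing ingredient is equivariant rigidity, which is how the paper argues: after conjugating $H:=G_D$ into $G_{D_0}$ by a small element of $G$, one chooses an ambient $C^1$ diffeomorphism $\psi$, close to the identity, carrying $D$ onto $D_0$; then $\psi H\psi^{-1}$ and the restriction of $H$ are two $C^1$ close $H$-actions on the \emph{same} manifold $D_0$, and Lemma~\ref{lem: rel Palais} (Palais' theorem) says they are conjugate by a small $C^1$ diffeomorphism. This shows $D_0^{H}$ is $C^1$ close to $\psi(D^{G_D})$, and in particular is a compact properly embedded $k$-manifold with $l$ components; since $D_0^{G_{D_0}}\subseteq D_0^{H}$ is a proper embedding of compact $k$-manifolds with the same number of components, it is onto, so $D_0^{G_{D_0}}=D_0^{H}$ and therefore $D^{G_D}$ is $C^1$ close to $N_0$. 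Note that this comparison takes place on the fixed manifold $D_0$, where the $(k,l)$ count can be brought to bear; your route compares fixed-point sets in the ambient manifold through $M^{G_{D_0}}$, which does not control $M^{G_D}$ (it may be strictly larger, as above), does not a priori prevent components of $D^{G_D}$ from lying far from $N_0$, and rests on a transversality statement that is generally false.
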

\begin{proof}
Fix any $J\in \D^{k,l}(M)$ and prove continuity at $J$. By Lemma~\ref{lem: implications of slice}(vi)
there is a neighborhood $U$ of $J$ in $\D^{k,l}(M)$
such that $G_D$ can be conjugated into $G_J$ by a small
element of $G$, and since such conjugation results in only small $C^1$ changes of $D$ and $D^{G_D}$
we may assume that $G_D\le G_J$. Thus both $D$ and $J$ are invariant under $H=G_D$.

After possibly shrinking $U$ further we can find a $C^1$ diffeomorphism $\psi$
with $\psi(D)=J$ such that $\psi$ is supported in a small neighborhood of $J$.
(To find $\psi$ we first isotope $\d D$ to $\d J$ in the the tubular neighborhood $\d J$,
then extend $J$ to a boundaryless embedded $C^1$ submanifold, use
its tubular neighborhood to isotope $D$ to $J$ relative boundary, 
and finally extend the isotopy to an ambient one). 
Thus $\psi H\psi^{-1}$ and $H$ are two $C^1$ close $H$-actions on $J$. 
By Lemma~\ref{lem: rel Palais} these actions are $C^1$ diffeomorphic
via a small diffeomorphism.

By Lemma~\ref{lem: fixed point set is C1}
the fixed point sets of the actions of 
$\psi H\psi^{-1}$, $H$, $G_J$ on $J$
are compact properly embedded $k$-dimensional $C^1$ submanifolds of $J$ with
$l$ connected components. 
Any proper embedding of compact $k$-manifolds with the same number of components
is surjective, so since $H\le G_J$ the fixed point sets of $H$, $G_J$ in $J$
coincide.  
Thus $J^{G_J}$ is $C^1$ close to the fixed point set of $\psi H\psi^{-1}$
which is $D^{G_D}$.
\end{proof}

\begin{lem}
\label{lem: tub nbhd straight}
If $B$ is a compact $C^1$ submanifold of a manifold $F$, then there are
two nested tubular neighborhoods $T\subset T_2$ of $\d B$ in $F$ and 
a $C^1$ self-map $q_B$ of $F$ that is the identity outside $T_2$, 
and that equals the normal bundle projection on $T$. 
\end{lem}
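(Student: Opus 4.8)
The plan is to build the pair $T \subset T_2$ and the map $q_B$ essentially by hand from a single tubular neighborhood of $\partial B$, using a radial cutoff in the normal directions. First I would fix, via~\cite[Thm.\ 3.6, Chap 2]{Hir-book}, a $C^1$ diffeomorphism putting $\partial B$ (a compact boundaryless $C^1$ submanifold of $F$, since $B$ is a $C^1$ manifold with boundary) into $C^\infty$ position, and then take a Riemannian tubular neighborhood; transporting everything back by that diffeomorphism, I get a tubular neighborhood $W$ of $\partial B$ in $F$ together with a normal bundle projection $\pi_0 \colon W \to \partial B$ in the sense defined just before the lemma. Because $\partial B$ is compact I may arrange $W$ to be the image of an open disk subbundle of the normal bundle $\nu(\partial B)$, so there is a fiberwise radial coordinate $r \colon W \to [0,1)$, continuous (indeed $C^1$ off the zero section) with $r^{-1}(0) = \partial B$, and the fibers of $\pi_0$ are the radial rays.

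Next I would choose radii $0 < r_1 < r_2 < 1$ and set $T := r^{-1}([0,r_1])$ and $T_2 := r^{-1}([0,r_2))$ (or the closure, as convenient); these are nested tubular neighborhoods of $\partial B$. Then I would define $q_B$ by a fiberwise radial pushing map: pick a $C^1$ function $\rho \colon [0,\infty) \to [0,\infty)$ with $\rho \equiv 0$ on $[0,r_1]$, $\rho(t) = t$ for $t \ge r_2$, and $0 \le \rho' $ (so $\rho$ is a monotone reparametrization interpolating between the constant $0$ and the identity), and declare $q_B$ to be the identity on $F \setminus T_2$ and, on $W$, the unique map preserving $\pi_0$-fibers that sends the point at radius $t$ to the point at radius $\rho(t)$ along the same ray. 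On $T$ this is exactly $\pi_0$, hence the normal bundle projection; outside $T_2$ it is the identity; and the two definitions agree on the overlap $r^{-1}([r_2,1))$.

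The one genuine point to check — and the step I expect to be the main obstacle — is that $q_B$ is globally $C^1$, and in particular $C^1$ across the zero section $\partial B$ where the radial coordinate is not smooth. The standard device is the usual trick for collapsing maps near a submanifold: write $q_B$ in the vector-bundle model as $v \mapsto \lambda(\|v\|)\,v$ for a suitable $C^1$ function $\lambda \colon [0,\infty)\to[0,1]$ with $\lambda \equiv 0$ near $0$ and $\lambda \equiv 1$ near $r_2$ (so $\rho(t) = \lambda(t)t$), and to observe that $v \mapsto \lambda(\|v\|)v$ is $C^1$ on all of $\nu(\partial B)$ because $\lambda$ vanishes identically near the origin, so the map is identically $0$ there and all first derivatives exist and vanish; away from the zero section $\|v\|$ is $C^1$, so the composite is $C^1$ there as well. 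Since the tubularization diffeomorphism and $\pi_0$ are only $C^1$ to begin with, $C^1$ is the best one can claim, but it is all that is asserted. Transporting this model map back to $F$ by the $C^1$ diffeomorphisms used to construct $W$ and extending by the identity gives the required $q_B$, completing the proof.
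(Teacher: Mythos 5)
Your construction is correct, and at heart it uses the same mechanism as the paper's proof: a cutoff in the normal parameter that interpolates between the normal bundle projection near $\d B$ and the identity outside a slightly larger tube, carried out after putting things in smooth position as in the paper's definition of tubular neighborhood. The difference lies in the model for the tube. The paper identifies $T_2$ with a product $[-2,2]\times\d B$ and takes $(t,z)\mapsto(\tau(t),z)$ for a smooth $\tau$ vanishing on $[-1,1]$; this tacitly uses that in the intended application $B=J^{G_J}$ has the same dimension as $F$, so $\d B$ is a two-sided codimension-one submanifold with trivial normal bundle, and smoothness of the interpolation is then automatic because $t$ is a genuine signed coordinate passing through $0$. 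You instead work with the full normal disk bundle of $\d B$ and push radially via $v\mapsto\lambda(\|v\|)\,v$, which proves the lemma as literally stated, in arbitrary codimension and with no triviality assumption, at the price of the one extra point you correctly single out and settle: $C^1$-regularity across the zero section, which holds since $\lambda$ vanishes identically near $0$ (there the map is the bundle projection followed by the zero section) and $\|v\|$ is smooth away from the zero section, with everything transported back by the $C^1$ smoothing diffeomorphism. So your argument is a mild generalization of the paper's; in the paper's codimension-zero setting the product collar simply makes that smoothness check unnecessary.
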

\begin{proof}
Fix a tubular neighborhood $T_2$ of $\d B$ in $F$,
and identify it with $[-2,2]\times \d B$
where $\d B$ corresponds to $\{0\}\times\d B$.
Let $T$ be the subset of $T_2$ corresponding under the identification
to $[-1,1]\times \d B$.  
Fix a smooth non-decreasing self-map $\tau$ of $[-2,2]$ that vanishes on $[-1,1]$ and
equals the identity near the endpoints $-2$, $2$. 
Then the self-map of 
$[-2,2]\times \d B$ given by $(t, z)\to (\tau(t), z)$ 
interpolates between the identity near the boundary and
the projection $[-1,1]\times\d B\to\{0\}\times \d B$.
\end{proof}

Let $\E^{k,l}(M)$ be the subspace of $\D^{k,l}(M)\times M$ consisting of all the
pairs $(D,u)$ with $D\in\D^{k,l}(M)$ and $u\in D^{G_D}$. Let 
$\pi\co\E^{k,l}(M)\to\D^{k,l}(M)$ be the coordinate projection, i.e., 
$\pi(D, u)=D$.

\begin{lem}
\label{lem: loc trivial}
The map $\pi\co \E^{k,l}(M)\to\D^{k,l}(M)$ 
is a locally trivial  bundle.
\end{lem}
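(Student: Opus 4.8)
The plan is to show local triviality by exhibiting, around any $J\in\D^{k,l}(M)$, a neighborhood $V$ over which $\pi^{-1}(V)$ is homeomorphic to $V\times J^{G_J}$ compatibly with $\pi$. First I would invoke Lemma~\ref{lem: implications of slice}(vi) together with the argument of Lemma~\ref{lem: fixed pt set contin} to pass to a neighborhood $U$ of $J$ on which $G_D$ has been conjugated by a small element of $G$ into $H:=G_J$, so that every $D\in U$ is $H$-invariant and, by the surjectivity of proper embeddings of compact $k$-manifolds with $l$ components, $D^{G_D}=D^H$. This reduces the problem to a statement about the single compact group $H$: over $U$, the assignment $D\mapsto D^H$ should be the fiber of a trivial bundle. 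Next, as in the proof of Lemma~\ref{lem: fixed pt set contin}, after shrinking $U$ I would produce for each $D\in U$ a $C^1$ diffeomorphism $\psi_D$ of $M$, supported in a fixed small neighborhood of $J$, with $\psi_D(D)=J$, arranged to depend continuously on $D$ (this is where I must be a little careful: the ambient isotopy extension in that lemma should be performed continuously in $D$, which is possible since the isotopies are built from tubular-neighborhood data varying continuously in $D$).

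The key geometric input is then Lemma~\ref{lem: rel Palais}: the two $C^1$ $H$-actions $\psi_D H\psi_D^{-1}$ and $H$ on $J$ are $C^1$ close and hence conjugate by a small $C^1$ diffeomorphism $\theta_D$ of $J$, and $\theta_D$ likewise can be taken to depend continuously on $D$ and to vary continuously to the identity as $D\to J$. The composition $h_D:=\theta_D\circ\psi_D|_D\colon D\to J$ then carries $D^H$ onto the fixed point set of $\psi_D H\psi_D^{-1}$ pushed forward by $\theta_D$, which is exactly $J^H=J^{G_J}$. So the map
\[
\Phi\colon \pi^{-1}(U)\longrightarrow U\times J^{G_J},\qquad (D,u)\longmapsto (D,\,h_D(u)),
\]
is a fiberwise bijection; its inverse is $(D,v)\mapsto (D,\,h_D^{-1}(v))$. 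Both $\Phi$ and $\Phi^{-1}$ are continuous because $D\mapsto h_D$ and $D\mapsto h_D^{-1}$ are continuous families of $C^1$ maps (here one uses that $\psi_D$ is supported in a fixed compact set, so $C^1$ dependence on $D$ is genuine continuity in the relevant function spaces) and $D\mapsto D^{G_D}$ is continuous by Lemma~\ref{lem: fixed pt set contin}. Since $\Phi$ commutes with the projections to $U$, it is a local trivialization, and as $J$ was arbitrary the bundle $\pi$ is locally trivial.

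\emph{Main obstacle.} The routine parts are the algebraic identities $D^{G_D}=D^H$ and the fact that $h_D$ matches the fixed point sets; the delicate point is \emph{continuity in the parameter $D$} at every stage — choosing $\psi_D$, the smoothing diffeomorphism of Lemma~\ref{lem: rel Palais}, and $\theta_D$ so that the whole package $h_D$ is a continuous (indeed continuously-to-the-identity) family. This needs the isotopy-extension and equivariant-smoothing constructions to be carried out with parameters, which is standard but must be stated with care; everything else follows formally.
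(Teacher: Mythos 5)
There is a genuine gap, and it sits exactly where you flag it: the continuity in $D$ of the family $h_D=\theta_D\circ\psi_D|_D$. Your trivialization is assembled from three non-canonical choices: the element of $G$ conjugating $G_D$ into $G_J$ (Lemma~\ref{lem: implications of slice}(vi) only asserts existence of such an element, not a continuous choice of it), the ambient diffeomorphism $\psi_D$ with $\psi_D(D)=J$, and above all the conjugating diffeomorphism $\theta_D$ supplied by Lemma~\ref{lem: rel Palais}. That lemma rests on Palais's $C^1$-to-$C^\infty$ equivariant approximation theorem, which is a pure existence statement; no parametrized version is available in the cited literature, and asserting that $\theta_D$ ``can be taken to depend continuously on $D$ and to vary continuously to the identity'' is precisely the unproved crux, not a routine remark. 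Since the continuity of $\Phi$ and $\Phi^{-1}$ is the entire content of local triviality, the proposal as written does not close the argument. There is also a smaller slip: after conjugation one only gets $G_D\le H=G_J$, so $D$ is $G_D$-invariant but generally not $H$-invariant, and ``$D^{G_D}=D^H$'' is not the right statement; what the argument of Lemma~\ref{lem: fixed pt set contin} actually gives is that the fixed point sets of $G_D$ and of $G_J$ \emph{in $J$} coincide, which is what you need for $h_D(D^{G_D})=J^{G_J}$ — repairable, but it shows the bookkeeping with varying isotropy groups is not as formal as the write-up suggests.

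The paper's proof avoids the parametrized-choice problem altogether: it fixes $J$, sets $B=J^{G_J}$, extends $B$ to a boundaryless $C^1$ submanifold $F$, and uses a single map $q=q_B\circ q_F$ (a tubular-neighborhood projection onto $F$ composed with the boundary-straightening map of Lemma~\ref{lem: tub nbhd straight}) that is \emph{independent of $D$}. Continuity of $D\mapsto D^{G_D}$ (Lemma~\ref{lem: fixed pt set contin}) then guarantees that for $D$ near $J$ the fixed map $q$ restricts to a $C^1$ diffeomorphism of $D^{G_D}$ onto $B$, so $(D,u)\mapsto(D,q(u))$ is automatically a continuous trivialization. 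If you want to salvage your route, you would have to prove a parametrized isotopy-extension statement for the $\psi_D$ \emph{and} a parametrized version of the equivariant smoothing/conjugation theorem for the $\theta_D$; the latter is a substantial result in its own right, whereas replacing the whole family $h_D$ by one fixed retraction-type map reduces the lemma to the continuity statement already proved.
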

\begin{proof}
Fix $J\in\D^{k,l}(M)$, set $B=J^{G_J}$, and extend $B$ to a $k$-dimensional boundaryless $C^1$ submanifold $F$ of $M$.
Let $q_F$ be a projection of a tubular neighborhood of $F$ in $M$.
For such $B$, $F$ let $q_B$ and $T$ be as in Lemma~\ref{lem: tub nbhd straight}. Let $q=q_B\circ q_F$. 
Using Lemma~\ref{lem: fixed pt set contin} we can assume that $D$ is so close to $J$ that
$q_F(D^{G_D})\subset T$ and $q_B$ restricts to a $C^1$ diffeomorphism of $q_F(D^{G_D})$
onto $B$. The map $(D, u)\to (D, q(u))$ is the desired 
local trivialization where $u\in D^{G_D}$.
\end{proof}

\begin{rmk}
The homeomorphism type of the fiber $\pi^{-1}(D)$ may 
depend on $D$. 
\end{rmk}

The group $G$ acts on $\E^{k,l}(M)$ diagonally, 
i.e., $g(D,u)=(gD, gu)$. 
In the following commutative diagram 
\begin{equation}
\label{form: cd}
\xymatrix{
\E^{k,l}(M) \ar[r]^\pi
\ar[d]_{p} &  \D^{k,l}(M)\ar[d]^{\bar p}  \\
\E^{k,l}(M)/G\ar[r]_{\bar \pi}& \D^{k,l}(M)/G}
\end{equation}
the vertical arrows are the $G$-orbit maps, and $\bar\pi$
sends the $G$-orbit of $(D,u)$ 
to the $G$-orbit of $D$.

By Remark~\ref{rmk: properties of Xk(M)} the $G$-spaces
$\D^{k,l}(M)$ and $\E^{k,l}(M)$ are Palais-proper
because the property is clearly inherited by invariant subspaces and 
preimages under equivariant maps. Also $\E^{k,l}(M)$
has a $G$-invariant metric induced by the $G$-invariant metrics on the factors
of $\D^{k,l}(M)\times M$.
The following result is a key observation of this paper, which reduces to 
Lemma~\ref{lem: loc trivial} when $G$ is trivial.

\begin{lem}
\label{lem: local triviality quotient}
$\bar \pi\co\E^{k,l}(M)/G\to\D^{k,l}(M)/G$ is a locally trivial fiber bundle.
\end{lem}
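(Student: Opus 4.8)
The plan is to reduce local triviality of $\bar\pi$ to a statement about slices and then invoke local triviality of $\pi$ on a slice, passing to quotients via Lemma~\ref{lem: implications of slice}(iii). Fix a point $GJ$ in $\D^{k,l}(M)/G$ over which we want a trivialization. Since $\D^{k,l}(M)$ is Palais-proper, $G_J$ is compact, so there is a $G_J$-slice $S$ at $J$ in $\D^{k,l}(M)$; by Lemma~\ref{lem: implications of slice}(iii) the natural map $S/G_J\to\D^{k,l}(M)/G$ is an open embedding onto a neighborhood of $GJ$. The strategy is to build a $G_J$-equivariant trivialization of $\pi^{-1}(S)\to S$ with fiber $B^{G_J}$ (where $B=J^{G_J}$), quotient by $G_J$, and check the result trivializes $\bar\pi$ over $S/G_J$.

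First I would analyze $\pi^{-1}(S)$. By Lemma~\ref{lem: implications of slice}(i), $G_D\le G_J$ for every $D\in S$, hence $D^{G_J}\subseteq D^{G_D}$; shrinking $S$ using Lemma~\ref{lem: charact of slices}(a) and the argument in Lemma~\ref{lem: fixed pt set contin} (where proper $C^1$ embeddings of compact $k$-manifolds with the same number of components are surjective) I would arrange that in fact $D^{G_D}=D^{G_J}$ for all $D\in S$. Then I can run the proof of Lemma~\ref{lem: loc trivial} $G_J$-equivariantly: choose $F$ a $G_J$-invariant boundaryless $C^1$ extension of $B$ inside $M^{G_J}$, take $q_F$ the (automatically $G_J$-equivariant) projection of a tubular neighborhood of $F$, and $q_B$, $T$ as in Lemma~\ref{lem: tub nbhd straight} (which can be made $G_J$-equivariant since $\d B\subset M^{G_J}$ and we may average); after further shrinking $S$ by Lemma~\ref{lem: fixed pt set contin}, the map $(D,u)\mapsto(D,q(u))$ with $q=q_B\circ q_F$ is a $G_J$-equivariant homeomorphism $\pi^{-1}(S)\xrightarrow{\ \cong\ }S\times B^{G_J}$ over $S$, where $G_J$ acts diagonally on the target and trivially on $B^{G_J}$ since $B^{G_J}=B$.

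Next I would pass to orbit spaces. Applying Lemma~\ref{lem: implications of slice}(ii)--(iii): $GS$ is open in $\D^{k,l}(M)$, $G(\pi^{-1}(S))=\bar p^{-1}$ of the image of $GS$ pulled back, and more precisely $\pi^{-1}(GS)=G\cdot\pi^{-1}(S)$, so $\bar\pi^{-1}(S/G_J)=(\pi^{-1}(S))/G_J$ sits inside $\E^{k,l}(M)/G$ as an open set, homeomorphically. Under the trivialization above this identifies with $(S\times B)/G_J$. Since $G_J$ acts trivially on the $B$ factor, $(S\times B)/G_J=(S/G_J)\times B$, and one checks this identification is compatible with $\bar\pi$ and the projection to $S/G_J$. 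Composing with the open embedding $S/G_J\hookrightarrow\D^{k,l}(M)/G$ of Lemma~\ref{lem: implications of slice}(iii) gives a local trivialization of $\bar\pi$ over a neighborhood of $GJ$ with fiber $B=J^{G_J}$.

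The main obstacle I anticipate is the equivariant refinement of the constructions feeding into Lemma~\ref{lem: loc trivial}: ensuring that the tubular-neighborhood projections $q_F$ and the interpolation $q_B$ of Lemma~\ref{lem: tub nbhd straight} can be chosen $G_J$-equivariant, which requires $G_J$-invariant choices of the ambient $C^1$ metric (available since $G_J$ is compact) and a $G_J$-invariant extension $F$ of $B$ inside $M^{G_J}$; and, just as delicately, the simultaneous shrinking of the slice $S$ so that all of $D^{G_D}=D^{G_J}$, $q_F(D^{G_D})\subset T$, and the diffeomorphism condition on $q_B|_{q_F(D^{G_D})}$ hold for every $D\in S$, using Lemma~\ref{lem: fixed pt set contin} together with Lemma~\ref{lem: implications of slice}(v) to replace $S$ by a smaller open slice at each step.
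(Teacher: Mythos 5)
Your overall architecture is the paper's: take a $G_J$-slice $S$ at $J$, make the local trivialization of Lemma~\ref{lem: loc trivial} equivariant for $H=G_J$, and descend to quotients via Lemma~\ref{lem: implications of slice}(iii) (together with Lemma~\ref{lem: preim of slice}, which you implicitly need in order to know that $\pi^{-1}(S)$ is an $H$-slice in $\E^{k,l}(M)$, so that $\pi^{-1}(S)/H\to\E^{k,l}(M)/G$ is an open embedding). But there is a genuine gap in the step where you ``arrange that $D^{G_D}=D^{G_J}$ for all $D\in S$'' after shrinking the slice: this is false, and no shrinking helps. Take $M=\R^3$, $G=O(3)$, and let $J$ be the unit $2$-disk in the $xy$-plane centered at the origin; then $G_J$ contains the rotations about the $z$-axis and the reflection $z\mapsto -z$, so $M^{G_J}=\{0\}=J^{G_J}$ and $J\in\D^{0,1}(\R^3)$. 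For small $\e>0$ the parallel unit disk $D_\e$ centered at $(0,0,\e)$ also lies in $\D^{0,1}(\R^3)$, with $G_{D_\e}\cong O(2)$ and $D_\e^{G_{D_\e}}=\{(0,0,\e)\}$, and it is $C^1$-close to $J$. Since $GS$ is open in $\D^{0,1}(\R^3)$, some translate $D'=gD_\e$ lies in $S$ however small $S$ is; then $D'^{G_{D'}}$ is a single point at distance $\e$ from the origin, while $D'^{G_J}=D'\cap M^{G_J}=\emptyset$. The justification you offer cannot repair this: $G_J$ need not preserve $D$, so $D\cap M^{G_J}$ is not the fixed point set of a group acting on $D$; Lemma~\ref{lem: fixed point set is C1} gives it no structure (it need not be a properly embedded $k$-manifold with $l$ components, and can be empty), so the surjectivity-of-proper-embeddings argument of Lemma~\ref{lem: fixed pt set contin} --- which in that lemma is applied to subgroups of $G_J$ acting on the single manifold $J$ --- has nothing to apply to.

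Fortunately the false equality is not needed, and this is precisely where the paper's proof differs from yours. The only role it plays for you is to make $(D,u)\mapsto (D,q(u))$ equivariant with $G_J$ acting trivially on the $B$-factor; but for that it suffices that $q=q_B\circ q_F$ is $G_J$-equivariant and has image in $B\subset F\subset M^{G_J}$: for $u\in D^{G_D}$ and $h\in G_J$ one has $q(hu)=h\,q(u)=q(u)$, even though $h$ generally moves $D^{G_D}$ (onto $(hD)^{G_{hD}}$) rather than fixing it pointwise. Keeping your equivariant choices of $F$, $q_F$, $q_B$ (note $q_B$ is automatically equivariant, since $F\subset M^{G_J}$ is fixed pointwise, so no averaging is needed) and your shrinking of $S$ via Lemmas~\ref{lem: fixed pt set contin} and~\ref{lem: implications of slice}(v) so that $q$ restricts to a $C^1$ diffeomorphism of each $D^{G_D}$ onto $B$, the rest of your argument then goes through and coincides with the paper's proof.
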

\begin{proof}
Let us first sketch the proof. The $G$-action permutes the fibers of
$\pi$ which are of the form $\pi^{-1}(y)=\{(y,u)\,:\, u\in y^{G_y}\}$.
Moreover, each fiber projects homeomorphically to $\E^{k,l}(M)/G$ because
the fiber is compact and the projection $p$ is injective on it:
If $g(y,u)=(y,v)$ with $u,v\in y^{G_y}$, then $g\in G_y$ and $gu=v$. 
Hence $gu=u$ implies $u=v$. To establish local triviality 
we analyze the structure of the orbit spaces via local slices.
Using that $G_y\le G_x$ for every $y$ in a $G_x$-slice
we show that the local trivialization of Lemma~\ref{lem: loc trivial} 
can be made $G_x$-equivariant, so it passes to the $G_x$-quotients of the slices,
yielding a local trivialization of $\bar\pi$.

Let us make this sketch rigorous. 
Set $X=\D^{k,l}(M)$. Fix $x\in X$ and set $H=G_x$. 
Let $S$ be a $H$-slice at $x$. 
Thus $G$ fixes $\pi^{-1}(x)$ pointwise, and hence, 
Lemma~\ref{lem: preim of slice} implies that
$E=\pi^{-1}(S)$ is an $H$-slice at any point of $\pi^{-1}(x)$.

Set $\bar E=p(E)$ and $\bar S=\bar p(S)$. Since $S$ is a slice, $\bar S$
is a neighborhood of $\bar p(x)$, and it suffices to show that
$\bar\pi$ is a locally trivial bundle over $\bar S$.

Let us show that $\bar E={\bar\pi}^{-1}(\bar S)$.
One inclusion follows from a diagram chase: 
If $z\in p(\pi^{-1}(S))$, then ${\bar\pi}(z)\in ({\bar\pi}\circ p)(\pi^{-1}(S))=
({\bar p}\circ\pi)(\pi^{-1}(S))=p(S)$.
Conversely, if $z\in {\bar\pi}^{-1}(\bar S)$, then ${\bar\pi}(z)=Gy$ for some $y\in S$.
Also $z$ is the $G$-orbit of some 
$(a, v)$ with $a\in X$, $v\in a^{G_a}$, and hence
${\bar\pi}(z)$ is the $G$-orbit of $\pi(a,v)=a$. Thus $a=gy$ for some $g\in G$.
Hence $z$ is the $G$-orbit of $(y, g^{-1}v)$, and moreover,
$g^{-1}v\in y^{G_y}$.  Thus $(y, g^{-1}v)\in E$, and so $z\in \bar E$.
 
Since $E$ and $S$ are $H$-slices,  the inclusion induced maps $E/H\to \bar E$,
$S/H\to \bar S$ are homeomorphisms, so we can identify
the map $\bar\pi\co \bar E\to\bar S$
with the map $E/H\to S/H$ also induced by $\pi$.

Set $B=x^H$ and note that $B$ is contained in $M^H$, which is
a boundaryless properly embedded smooth submanifold of $M$.
We can extend $B$ to a $k$-dimensional boundaryless $C^1$ submanifold $F$ of $M^H$.
Fix an $H$-invariant tubular neighborhood of $F$ in $M$, and let
$q_F$ be an $H$-equivariant projection of the neighborhood onto $M$ 
(obtained, e.g., as the nearest point projection 
of some smooth $H$-invariant Riemannian metric on $M$).

For such $B$, $F$ let $q_B$ and $T$ be as in Lemma~\ref{lem: tub nbhd straight}. 
Now Lemma~\ref{lem: fixed pt set contin} implies that by making $S$ smaller
we can assume that any $y\in S$ satisfies
$q_F(y^{G_y})\subset T$ and $q_B$ restricts to a $C^1$ diffeomorphism of $q_F(y^{G_y})$
onto $B$.  (Lemma~\ref{lem: implications of slice}(v) allows us to make $S$
arbitrary small.)
Then for each $y\in S$ the composite 
$q=q_B\circ q_F$ restricts to a $C^1$-diffeomorphism of 
$y^{G_y}$ onto $B$.

For every $y\in S$ we have $G_y\le H$, and hence $M^{G_y}\supseteq M^H\supseteq F$.
In particular, if $z\in y^{G_y}$ and $y\in S$, then $q(hz)=q(z)$
because $q$ is $H$-equivariant and $H$ acts trivially on the image of $q$.

The map $\phi\co E\to S\times B$ given by $(y, u)\to (y, q(u))$
is $H$-equivariant because it sends
$h(y,u)=(hy, hu)$ to $(hy, q(hu))=(hy, q(u))=(hy, hq(u))=h(y, q(y))$,
Also $\phi$ is a homeomorphism whose inverse sends $(y, v)$ to
$(y, (q\vert_{y^{G_y}})^{-1}(v))$.
Note that $H$ acts trivially on the $B$-factor. 
So $\phi$ descends to a homeomorphism  of the $H$-quotients 
$E/H\to S/H\times B$ which gives a desired local trivialization.
\end{proof}

Let $\D^k(M)$ be the subspace of $D\in\X(M)$ such that $D^{G_D}$ is contractible
and $k$-dimensional. 
Set $\E^k(M)=\{(D,u)\,:\, D\in\D^k(M), u\in D^{G_D}\}$. 
Note that $\D^k(M)$, $\E^k(M)$ are $G$-invariant subsets of $\D^{k,1}(M)$, $\E^{k,1}(M)$, respectively,
and moreover, $\E^k(M)$ is the $\pi$-preimage of $\D^k(M)$. Thus $\pi\co \E^k(M)\to\D^k(M)$ 
and ${\bar\pi}\co\E^k(M)/G\to\D^k(M)/G$ are locally trivial fiber bundles.

\begin{lem}
\label{lem: equiv extension}
If $A$ is a closed $G$-invariant subset of $\D^{k}(M)$, then
every continuous \mbox{$G$-equivariant} section $s_A\co A\to\E^{k}(M)$ of $\pi$ over $A$ 
extends to a continuous \mbox{$G$-equivariant} section $s$ of $\pi$ over $\D^{k}(M)$.
Moreover, if $s_A$ takes values in the interiors of the fibers,
then one can choose $s$ with the same property.
\end{lem}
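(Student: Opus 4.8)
The plan is to deduce Lemma~\ref{lem: equiv extension} from the corresponding statement about sections of the quotient bundle ${\bar\pi}\co\E^{k}(M)/G\to\D^{k}(M)/G$, which is a locally trivial bundle with contractible fibers over a metrizable (hence paracompact) base. First I would pass from the $G$-equivariant section $s_A$ over $A$ to an ordinary section $\bar s_A$ of $\bar\pi$ over the closed set $\bar A=\bar p(A)\subset\D^{k}(M)/G$: since $p$ restricted to each fiber of $\pi$ is a homeomorphism onto the corresponding fiber of $\bar\pi$ (as established in the sketch of Lemma~\ref{lem: local triviality quotient}), a $G$-equivariant section downstairs is literally the same data as a section upstairs. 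So the two extension problems — equivariant upstairs and plain downstairs — are equivalent, and likewise for the ``interior of the fiber'' refinement.

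Next I would carry out the extension downstairs. The base $\D^{k}(M)/G$ is metrizable by Lemma~\ref{lem: properties of XkD(M)}(3) and Remark~\ref{rmk: properties of Xk(M)}, hence paracompact; the fibers of $\bar\pi$ are contractible (they are the spaces $D^{G_D}$, which are contractible by definition of $\D^k(M)$, identified via the local trivializations of Lemma~\ref{lem: local triviality quotient}); and $\bar A$ is closed. A locally trivial bundle with a fixed contractible fiber is, in particular, a fibration with aspherical fibers and — more to the point — one whose fibers are absolute retracts once we know they are finite-dimensional ANRs, which holds here because $D^{G_D}$ is a $C^1$ submanifold by Lemma~\ref{lem: fixed point set is C1}. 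I would therefore invoke Michael's selection theorem (or the classical Dugundji-style extension for bundles with AR fibers): the assignment $\bar D\mapsto \bar\pi^{-1}(\bar D)$ is a lower semicontinuous map from a paracompact space to the (closed, convex-like via local charts) AR fibers, a section exists over any closed subset, and any section over a closed subset extends. This produces $\bar s\co\D^{k}(M)/G\to\E^k(M)/G$ extending $\bar s_A$. For the refinement, the relative interiors of the fibers are open convex-like subsets of ARs, still ARs, and the same selection/extension argument applies verbatim, so if $\bar s_A$ lands in the interiors we may take $\bar s$ to do so as well.

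Finally I would lift $\bar s$ back to a $G$-equivariant section $s$ of $\pi$: given $D\in\D^k(M)$, $\bar s(\bar p(D))$ is a $G$-orbit of pairs $(D',u')$ with $D'=gD$ for some $g$, and there is a unique point $u\in D^{G_D}$ with $(D,u)$ in that orbit (uniqueness because, if $(D,u)$ and $(D,v)$ lie in the same $G$-orbit with $u,v\in D^{G_D}$, then the intertwining element lies in $G_D$ and fixes $u$, forcing $u=v$); set $s(D)=(D,u)$. Equivariance of $s$ is automatic since $\bar s$ is a genuine section of $\bar\pi$, and continuity of $s$ follows from local triviality: over a slice-quotient chart as in Lemma~\ref{lem: local triviality quotient}, $p$ is a trivial $G/H$-bundle on the total space and $s$ is just the ``canonical lift'' of $\bar s$, which is continuous. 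That $s$ agrees with $s_A$ on $A$ is immediate from the bijective correspondence between the two section problems, and the interior property is preserved under the lift.

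The main obstacle I anticipate is the soft-topology bookkeeping in the middle step: one must verify that the fibers $D^{G_D}$, as realized inside $\E^k(M)/G$, are bona fide finite-dimensional absolute retracts and that the local trivializations are compatible enough to run Michael's theorem (equivalently, to run a Dugundji-type extension chart by chart and patch with a partition of unity subordinate to the trivializing cover). Contractibility alone is not quite enough for a selection theorem — one needs the ANR/AR property and either finite-dimensionality of the base or a convex structure on the fibers — but the $C^1$-submanifold structure from Lemma~\ref{lem: fixed point set is C1} supplies exactly the ANR hypothesis, and local triviality gives local convex models, so the argument closes. Everything else — the equivalence of the equivariant and quotient problems, and the lift back up — is formal once Lemma~\ref{lem: local triviality quotient} is in hand.
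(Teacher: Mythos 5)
Your proposal is correct and follows essentially the same route as the paper: descend to the quotient bundle $\bar\pi$, extend the section over the closed image of $A$ using a section-extension theorem for locally trivial bundles with absolute-retract fibers over a metrizable base (the paper cites Palais's Theorem 9 where you cite Michael/Dugundji-type results, and notes as you do that the fibers are compact contractible manifolds or their interiors, hence ARs), then lift back via the unique point of each fiber lying in the prescribed $G$-orbit, checking continuity through the slice-based local trivializations. The only cosmetic difference is that the paper verifies continuity of the lift by a direct approximation argument rather than your ``canonical lift over a slice chart'' phrasing, but the underlying mechanism is the same.
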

\begin{proof}
By $G$-equivariance, $s_A$ descends to a continuous map ${\bar s}_A\co A/G\to\E^{k,l}(M)/G$ that takes
the $G$-orbit of $x\in A$ to the $G$-orbit of $s(x)$, i.e., 
${\bar s}_A(\bar p(x))=p(s_A(x))$. 
The following section extension property can be found 
in~\cite[Theorem 9]{Pal-inf-dim}:
Given any locally trivial fiber bundle whose 
base is metrizable and fiber is an absolute retract,
any section of the bundle
defined on a closed subset can be extended to the whole base. 
In our case the fiber is either a compact contractible manifold with boundary,
or its interior, which are absolute retracts.

By definition of the quotient topology, $A/G$ is closed in $\D^{k,l}(M)/G$.
Thus ${\bar s}_A$ extends to a continuous section $\bar s$ of $\bar \pi$.
If $s_A$ takes values in the interiors of the fibers,
then so does ${\bar s}_A$, and hence it can be extended to $\bar s$ 
with the same property.
Now define $s\co\D^{k,l}(M)\to\E^{k,l}(M)$ by letting $s(x)$ be the intersection of 
the fiber $\pi^{-1}(x)$ and the $G$-orbit $p^{-1}({\bar s}(\bar p(x)))$, see the diagram (\ref{form: cd}).  
Let us check that $s$ has the claimed properties. 

The intersection consists of a single point, indeed,
if $z$, $gz$ are in the intersection,
then $\pi(z)=x=\pi(gz)=g\pi(z)$ and hence $g$ is in the isotropy subgroup of $z$, 
which fixes $\pi^{-1}(\pi(z))$ pointwise so that $gz=z$.
If $a\in A$, then $s(a)$ and $s_A(a)$ are points of $\pi^{-1}(a)$ that are
mapped by $p$ to ${\bar s}({\bar p}(a))={\bar s}_A({\bar p}(a))$, and since such a point
is unique, $s$ extends $s_A$.

By construction $\pi(s(x))\subseteq\pi\left(\pi^{-1}(x))\right)=\{x\}$, so $s$ is a section.
The map $s$ is $G$-equivariant because $s(gx)$ is the intersection of 
$\pi^{-1}(gx)=g\pi^{-1}(x)$ and  $p^{-1}({\bar s}(\bar p(gx)))= p^{-1}({\bar s}(\bar p(x)))=
g\left(p^{-1}({\bar s}(\bar p(x)))\right)$,
which equals $gs(x)$.

We write $u\approx v$ to indicate that $u,v$ are close to each other.
To prove continuity of $s$ write $s(x)=(x,c)$ and $s(y)=(y,d)$, where $c\in x$, $d\in y$,
assume $x\approx y$, and try to show that $c\approx d$.
From $x\approx y$ we get closeness of the the $G$-orbits 
$p^{-1}({\bar s}(\bar p(x))$, $p^{-1}({\bar s}(\bar p(y))$ of
$s(x)$, $s(y)$, respectively. Hence there is $g\in G$ such that $gs(x)\approx s(y)$,
or equivalently, $gx\approx y$ and $gc\approx d$. 
From $x\approx y$ we get $gx\approx gy$, and hence
$x\approx y\approx gx\approx gy$. Then $g$ is close to some elements of the isotropy subgroups
of $x$, $y$, which fix the fibers $\pi^{-1}(x)$, $\pi^{-1}(y)$ pointwise.
Hence $c\approx gc$ and $d\approx gd$ which together with $gc\approx d$ implies
$c\approx d$.
\end{proof}

The above proof also yields the following observation: 
\smallskip

\begin{lem}
\label{lem: equiv extension from base}
If $A$ is a closed $G$-invariant subset of $\D^{k}(M)$, then
every \mbox{$G$-equivariant} section ${\bar s}_A\co A/G\to\E^{k,l}(M)/G$
extends to a continuous section $\bar s$ of $\bar\pi$ over $\D^{k}(M)/G$
which lifts to \mbox{$G$-equivariant} section of $\pi$ over $\D^k(M)$. 
Moreover, if ${\bar s}_A$ takes values in the interiors of the fibers,
then one can choose $s$ with the same property.
\end{lem}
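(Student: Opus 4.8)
The plan is to observe that the proof of Lemma~\ref{lem: equiv extension}, from the sentence beginning ``${\bar s}_A$ extends to a continuous section $\bar s$'' onward, never uses that the downstairs section arose from a $G$-equivariant section upstairs; it uses only that $A/G$ is closed in the base and that one is handed \emph{some} continuous section over $A/G$. So the present lemma is a repackaging of that material.

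First I would note that $G$ acts trivially on the orbit spaces, so the word ``$G$-equivariant'' attached to ${\bar s}_A$ carries no content beyond continuity: the data is a continuous section ${\bar s}_A$ of $\bar\pi$ over $A/G$, and $A/G$ is closed in $\D^k(M)/G$ by definition of the quotient topology. Next I would invoke the section extension theorem of Palais, \cite[Theorem 9]{Pal-inf-dim}. As recorded just before Lemma~\ref{lem: equiv extension}, the map $\bar\pi\co\E^k(M)/G\to\D^k(M)/G$ is a locally trivial fiber bundle; its base is metrizable by Remark~\ref{rmk: properties of Xk(M)}, and its fiber is a compact contractible manifold with boundary, hence an absolute retract. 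Therefore ${\bar s}_A$ extends to a continuous section $\bar s$ of $\bar\pi$ over all of $\D^k(M)/G$; if ${\bar s}_A$ takes values in the fiber interiors, the same theorem applied to the open sub-bundle with fiber that interior (still an absolute retract) produces $\bar s$ with that property, local trivializations carrying fiber interiors to fiber interiors.

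Finally I would lift $\bar s$ to a $G$-equivariant section $s$ of $\pi$ over $\D^k(M)$ exactly as in the proof of Lemma~\ref{lem: equiv extension}: set $s(x)$ to be the unique point of $\pi^{-1}(x)\cap p^{-1}\!\big({\bar s}(\bar p(x))\big)$, uniqueness holding because any $g\in G$ moving one such point to another lies in the isotropy subgroup of $x$, which fixes $\pi^{-1}(x)$ pointwise. The verifications that $s$ is a section, is $G$-equivariant, and is continuous, together with the passage of the interior-values property from $\bar s$ to $s$ (since $p$ sends fiber interiors to fiber interiors), are word-for-word those already carried out for Lemma~\ref{lem: equiv extension}.

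The only real point, and it is a mild one, is to confirm that no step above covertly uses the equivariance of a section defined upstairs. I expect this to be immediate: in the proof of Lemma~\ref{lem: equiv extension} that equivariance is used solely to manufacture ${\bar s}_A$ in the first place, after which the argument proceeds entirely downstairs or through the lifting recipe, both of which are available to us here by hypothesis.
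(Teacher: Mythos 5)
Your proposal is correct and takes essentially the same route as the paper, which offers no separate argument for this lemma but simply remarks that the proof of Lemma~\ref{lem: equiv extension} yields it: as you observe, from the point where ${\bar s}_A$ is extended via Palais's section-extension theorem onward, that proof uses only a continuous section of $\bar\pi$ over the closed set $A/G$, and the lift $s(x)=\pi^{-1}(x)\cap p^{-1}\bigl({\bar s}(\bar p(x))\bigr)$ together with the interior-values clause goes through verbatim. Nothing further is needed.
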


Set $\D(M)=\bigcup_{k=0}^n\D^k(M)$. We are now ready to prove Theorem~\ref{thm: intro main}. 
\medskip

\begin{thm}
For any $G$-invariant closed subset $Z$ of $\X(M)$ every $G$-equivariant center
$\mathfrak z\co Z\to M$ extends to a $G$-equivariant center $\cf\co\D(M)\to M$.
\end{thm}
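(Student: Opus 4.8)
The plan is to realize $\cf$ as the union of the sections $s$ produced by Lemma~\ref{lem: equiv extension}, one for each dimension $k$, organized so that the pieces agree on overlaps and glue to a continuous map on all of $\D(M)$. First I would observe that $\D(M)=\bigcup_{k=0}^n\D^k(M)$ is a partition into disjoint $G$-invariant subsets, but these pieces are not closed in $\D(M)$, so a naive dimension-by-dimension extension is not immediately legitimate; the right bookkeeping is to extend over the strata in order of \emph{increasing} dimension, at each stage working inside a suitable closed subset. Concretely, set $Z_k=Z\cup\bigcup_{j\le k}\D^j(M)$; one shows inductively that the closure of $\D^j(M)$ inside $\X(M)$ meets $\D(M)$ only in $\bigcup_{i\le j}\D^i(M)$ (a point limiting onto a fixed-point set of smaller dimension has its own fixed-point set of dimension $\le j$, by the semicontinuity packaged in Lemmas~\ref{lem: fixed pt set contin} and~\ref{lem: fixed point set is C1}), so $Z_k\cap\D(M)$ is closed in $\D(M)$ for each $k$ and $Z_{n}\cap \D(M)=\D(M)$.

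Next I would feed this into Lemma~\ref{lem: equiv extension}. The hypothesis there is stated for a closed $G$-invariant subset $A$ of a single $\D^k(M)$, so the induction step needs the relative extension applied on the locally trivial bundle $\pi\co\E^k(M)\to\D^k(M)$ with $A=Z_{k-1}\cap\D^k(M)$, which is closed and $G$-invariant in $\D^k(M)$ by the previous paragraph. The given center $\mathfrak z$ on $Z$ restricts to a $G$-equivariant section over $Z\cap\D^k(M)$ taking values in the relative interiors of the disks $D$; by Lemma~\ref{lem: fixed point set is C1} the fixed-point set $D^{G_D}$ meets that relative interior, so a center value in $\mathrm{int}(D)$ that already lies in $D^{G_D}$ (which it must, by $G$-equivariance) is a value in $\mathrm{int}_D(D^{G_D})$, i.e.\ in the interior of the fiber of $\pi$. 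The ``interior of the fibers'' clause of Lemma~\ref{lem: equiv extension} then produces a $G$-equivariant section $s_k$ over $\D^k(M)$ extending the previously constructed data and still valued in fiber interiors, hence in relative interiors of disks; this is exactly a $G$-equivariant center on $\D^k(M)$ agreeing with $\mathfrak z$ where defined and with $s_{k-1}$ where defined.

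Finally I would assemble $\cf\co\D(M)\to M$ by declaring $\cf=s_k$ on $\D^k(M)$. Since the strata are disjoint this is well defined, it is $G$-equivariant because each $s_k$ is, it extends $\mathfrak z$ because every $s_k$ does, and $\cf(D)$ lies in the relative interior of $D$ for every $D$ since each $s_k$ takes values in fiber interiors. The only real point requiring care — and the step I expect to be the main obstacle — is continuity of $\cf$ at the boundary between strata: one must check that if $D_i\to D$ in $\X(M)$ with $D_i\in\D^{k_i}(M)$ and $D\in\D^k(M)$ (necessarily $k\ge k_i$ for infinitely many $i$, or rather $k$ can only drop in the limit), then $\cf(D_i)\to\cf(D)$. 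This follows from the compatibility built into the induction: at stage $k$, $s_k$ was obtained by Lemma~\ref{lem: equiv extension} as a \emph{genuine extension} of the section already defined on the closed set $Z_{k-1}\cap\D^k(M)\supseteq\big(\overline{\bigcup_{j<k}\D^j(M)}\big)\cap\D^k(M)$, and the extended section is continuous on all of $\D^k(M)$; combined with continuity of each $s_{k_i}$ on $\D^{k_i}(M)$ and the fact that the closures of the lower strata inside $\D(M)$ are swallowed by $\D^{\le k}(M)$, a limit $D_i\to D$ forces $\cf(D_i)$ into the continuity domain of the top-indexed section involved, giving $\cf(D_i)\to s_k(D)=\cf(D)$. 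One then notes that the relative version of Theorem~\ref{thm: intro main} is exactly this statement with $A=Z$, and Proposition~\ref{prop: no center} is deduced separately as indicated.
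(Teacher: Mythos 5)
There is a genuine gap, and it sits exactly at the point you flag as ``the main obstacle.'' Because the strata are pairwise disjoint and, as you yourself argue, the closure of $\bigcup_{j<k}\D^j(M)$ meets $\D(M)$ only inside $\bigcup_{j<k}\D^j(M)$, the set you feed into Lemma~\ref{lem: equiv extension} at stage $k$ is $Z_{k-1}\cap\D^k(M)=Z\cap\D^k(M)$, and the containment you invoke for continuity, $Z_{k-1}\cap\D^k(M)\supseteq\bigl(\overline{\bigcup_{j<k}\D^j(M)}\bigr)\cap\D^k(M)$, is vacuous: its right-hand side is empty. So the section $s_k$ over $\D^k(M)$ is constrained only by $\mathfrak z$ and carries no information whatsoever about $s_j$ for $j<k$. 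The dangerous configuration is the one where dimension drops in the limit: $D_i\in\D^k(M)$ with $D_i\to D\in\D^j(M)$, $j<k$ (higher strata accumulate on lower ones, not the reverse). Lemma~\ref{lem: equiv extension} only gives continuity of $s_k$ on $\D^k(M)$ itself; near such a limit point its values are completely unconstrained, so nothing forces $s_k(D_i)\to s_j(D)$, and the glued map $\cf$ may be discontinuous precisely across the strata boundary. This is the central difficulty of the theorem, and your induction does not address it.

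The paper's proof inserts an extra mechanism to create the compatibility your argument lacks. At the step from $\Se^k$ to $\Se^{k+1}$ it takes the center already built on the \emph{closed} subset $\Ze^k=\Se^k\cup(Z\cap\Se^{k+1})$ of $\Se^{k+1}$, extends it as a $G$-map $\Se^{k+1}\to\mathcal H$ using the equivariant embedding $M\subset\mathcal H$ and Antonyan's equivariant extension theorem, retracts back to $M$ via the equivariant tubular neighborhood, and then shrinks to a closed $G$-invariant neighborhood $V$ of $\Ze^k$ in $\Se^{k+1}$ on which the extended map still sends $D$ into $\Int(D)$ (hence, by equivariance and Lemma~\ref{lem: fixed point set is C1}, into $\Int(D^{G_D})$). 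Only then is Lemma~\ref{lem: equiv extension} applied, with $A=V\cap\D^{k+1}$: a closed collar \emph{inside} the open stratum that carries the values coming from the lower strata. The final map on $\Se^{k+1}$ is assembled from two maps on the open sets $\Int(V)$ and $\D^{k+1}$ that agree on their overlap, so continuity is automatic from the pasting lemma rather than needing a limit argument at the stratum boundary. Your proposal would become a proof only if you supply some substitute for this neighborhood-extension step (an equivariant Tietze--Dugundji-type extension off $\Se^k$ into $\Se^{k+1}$, or an equivalent device); choosing $A=Z\cap\D^k(M)$ alone cannot do it.
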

\begin{proof}
Given a nonnegative integer $m$ let $\Se^m=\bigcup_{i\le m}\D^i$. 
Note that $\Se^m$ is closed in $\D(M)$ for if $D_i\to D$,
then $G_D$ contains a conjugate of $G_{D_i}$ by Lemma~\ref{lem: implications of slice}(vi) 
and, in particular, 
if each $D_i$ lies in $\Se^{m}$, then so does $D$.

We proceed by induction on the dimension of $D^{G_D}$.
Let $l$ be the smallest integer with $\Se^l\neq\emptyset$. Use 
Lemma~\ref{lem: equiv extension from base} with $A=Z\cap \Se^l$
to define a $G$-equivariant center $\cf\co\Se^l\to M$ that extends $\mathfrak z$.
Inductively, we suppose that a map $\cf$ with claimed properties
is defined on $\Se^k$, and hence on $\Ze^k=\Se^k\cup (Z\cap \Se^{k+1})$, and then
try to extend it to $\Se^{k+1}$.

To this end 
fix a smooth $G$-equivariant embedding of $\tau\co M\to\mathcal H$ 
where $\mathcal H$ is a Hilbert space  
equipped with the orthogonal $G$-action~\cite[Theorem 0.1]{Kaa-emb}.
The latter means that each element of $g$ is continuous, linear,
and preserves the inner product. 
Then the usual Riemannian open tubular neighborhood $T_\tau$ of $\tau$ is $G$-invariant 
and its projection $p_\tau\co T_\tau\to M$ is $G$-equivariant~\cite[Theorem 5.1]{Kaa-tub-nbhd}. 

In~\cite[Corollary 3.5 and Example 4.1]{Ant-eq-maps-cnvx-str} 
one finds a general extension result for $G$-maps from a Palais-proper space $X$ with 
a $G$-invariant metric to any locally convex linear $G$-space, such as $\mathcal H$.
It implies that for any closed $G$-invariant subset $A$ of $X$
every $G$-map $A\to\mathcal H$ can be extended
to a $G$-map $X\to \mathcal H$.
For this result the $G$-action on $\mathcal H$ must be linear, continuous, 
and every compact subgroup of $G$ has to fix a point of $\mathcal H$. 
The latter is true for orthogonal actions of compact groups on $\mathcal H$, and
in fact, any isometric group action with a bounded
orbit on a $\mathrm{CAT}(0)$ space fixes the circumcenter of the orbit, and 
in a $\mathrm{CAT}(0)$ space, such as $\mathcal H$, every bounded set has a unique 
circumcenter~\cite[Proposition II.2.7]{BriHae-book}.

Thus $\cf\co \Ze^k\to M\subset\mathcal H$ 
can be extended to a $G$-map from $\tilde{\cf}\co \Se^{k+1}\to \mathcal H$.
Set $O_\tau=\tilde\cf^{\,-1}(T_\tau)$ and $\bar c=p_\tau\circ\tilde\cf\vert_{O_\tau}$. 
Thus $\bar\cf\co O_\tau\to M$ is a $G$-map that extends $\cf$.

Let us show that there is a closed neighborhood $V$ of $\Ze^k$ in $O_\tau$ 
on which $\bar c(D)\in\Int(D)$. To this end
fix a $G$-invariant metric $d$ on $\D$ (using the metric of Remark~\ref{rmk: properties of Xk(M)}
in each $\D^k(M)$ and setting the distance between points of different $\D^k(M)$ to be $1$). 
Since $\bar{\cf}$ is continuous for any $D\in\D$ with $\bar{\cf}(D)\in\Int(D)$ 
there is $\e_D>0$ such that $\bar{\cf}(I)\in\Int(I)$ for every $I$ in the $\e_D$-ball
centered at $D$.
We can use the same $\e_D$ for every $D$  in the same $G$-orbit. 
The intersection with $\Se^{k+1}$ of the union of all such balls is a
$G$-invariant neighborhood $U$ of $\Ze^k$ in $\Se^{k+1}$, 
and $\bar\cf\vert_U (D)\in\Int(D)$.
Finally, $U$ contains a closed $G$-invariant neighborhood $V$ of $\Ze^k$, e.g., let
$V$ be the $f$-preimage of $\left[0,\frac12\right]$ where 
\[
f(x)=\frac{d(x, \Ze^k)}{d(x, \Ze^k)+d(x, \Se^{k+1}\setminus U)}.\]

It remains to enlarge $V$ to $\Se^{k+1}$.
%Lemma~\ref{lem: local triviality quotient} gives a locally trivial bundle over
%$\D^{k+1}$ whose fibers are the interiors of the fibers of the bundle $\E^{k+1}\to\D^{k+1}$. 
The restriction of $\bar\cf$ to $V_0=V\cap\D^{k+1}$
is a $G$-equivariant section of the bundle $\E^{k+1}\to\D^{k+1}$ over $V_0$. 
%Since $V_0$ is a closed $G$-invariant subset of $\D^{k+1}$,
Applying Lemma~\ref{lem: equiv extension} to the bundle whose fibers are $\Int(D^{G_D})$,
we extend $\bar\cf\vert_{V_0}$ to a $G$-equivariant
section $\cf_0$ over $\D^{k+1}$.
Since $\D^{k+1}$, $\Int(V)$ are open in $\Se^{k+1}$
and $\cf_0$, $\bar{\cf}\vert_{\Int(V)}$ agree on $\D^{k+1}\cap\Int(V)$, the maps
define a $G$-map $\Se^{k+1}\to M$ that takes each $D$
to a point in $\Int(D)$. 
This completes the induction step and proves the theorem.
\end{proof}

\begin{rmk}
The above proof reveals that Theorem~\ref{thm: intro main}
is a formal consequence of Lemma~\ref{lem: loc trivial}, and hence
if the lemma is true for the $C^0$ topology on $\X(M)$,
then the answer to Question~\ref{quest: intro C0 top} is affirmative.
\end{rmk}

\begin{proof}[Proof of Proposition~\ref{prop: no center}]
As discussed before the statement of the corollary, there exists 
an orthogonal $A_5$-action on $\R^r$ that has no fixed
points on an $A_5$-invariant smoothly embedded $6$-disk in $\R^r$.
Denote the $6$-disk by $\Delta$, consider 
the product of $\Delta$ and the $(n-6)$-disk with the trivial \mbox{$A_5$-action,} 
and smooth corners. The result is a smoothly embedded $A_5$-invariant
$n$-disk in $\R^{n+r-6}$ on which $A_5$ acts without fixed points.
Composing with the standard inclusion $\R^{n+r-6}\subset \R^m$ we can think
of the $n$-disk as sitting in $\R^m$, where we let 
$A_5$ act trivially on the orthogonal complement of $\R^{r+n-6}$ in $\R^m$. 
This gives the desired claim when $m\ge n+r-6\ge r$.

For the other case consider the inclusion $\R^r\subset\R^n$, where
$A_5$ acts trivially on the orthogonal complement of $\R^r$ in $\R^n$.
Let $\Delta^\prime$ be an $A_5$-invariant smooth tubular neighborhood of
$\Delta$ in $\R^n$ on which the $A_5$-action is fixed point free.
Note that $\Delta^\prime$ is an embedded $n$-disk in $\R^n$, so
composing with the inclusion $\R^n\subset\R^m$
gives the claim when $m\ge n\ge r$. 
\end{proof}

\appendix

\section{Compact Lie Group Actions on Low Dimensional Disks}
\label{sec: actions on D4}
  
The results of this appendix are surely known to experts but we could not find them in the literature.

It was shown by B.~Ker{\'e}kj{\'a}rt{\'o}
that any compact topological group action on a $2$-disk 
is equivalent to a linear action, see~\cite{Kol} and references therein. 
More precisely, cone off the boundary of the $G$-action on $D^2$, use~\cite{Kol}
to conclude that the resulting action on $S^2$ is topologically equivalent to 
an action by a subgroup of $O(3)$, which is actually in $O(2)$ since it has a fixed point,
and finally restrict the equivalence to $D^2$. 

For a smooth compact Lie group action, the uniformization gives  an alternative route:  
Find a $G$-invariant Riemannian metric on the interior, and map it conformally to the standard
hyperbolic disk so that the Lie group becomes a compact group of hyperbolic isometries,
and hence after conjugation the standard $O(2)$.
The same reasoning gives linearity of smooth actions on the closed interval,
whose isometry group in any metric is $O(1)$. 

It was shown in~\cite[Theorem B]{KwaSch-S3-linear} that any smooth action
of a compact Lie group on the $3$-disk is smoothly equivalent to a linear action.

\begin{lem}\label{lem:4-disk1}
If $G$ is a nontrivial finite group that
acts smoothly and effectively on the $4$-disk $D$, 
then either $D^G$ is a disk of dimension $\le 2$, or $G$ is an order two group
generated by an orientation-reversing involution and 
$D^G$ is a compact acyclic $3$-manifold.
\end{lem}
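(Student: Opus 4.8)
The plan is to use the fixed-point theorem of~\cite[Theorem~II.1]{BKS-fixed-pt-D4} (which rests on Thurston's theorem, now available through~\cite{DinLee-RF-geom} and~\cite{BLP}) to linearize the action near a fixed point, and then to argue according to the dimension of $D^G$. First I would average a smooth Riemannian metric over $G$, so that $G$ acts by isometries and $D^G$ is a compact $C^\infty$ totally geodesic submanifold of $D$; it is nonempty by the cited theorem, and by the argument of Lemma~\ref{lem: fixed point set is C1} it meets $\Int D$ and is transverse to $\d D$. Fixing $p\in\Int D\cap D^G$, I would observe that the isotropy representation $\r\co G\to O(T_pD)=O(4)$ is faithful: for $g\neq 1$ the set $\Fix(g)$ is a proper $C^1$ submanifold of $D$, so $(T_pD)^{\langle g\rangle}=T_p\Fix(g)$ is a proper subspace and $\r(g)\neq\id$. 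Set $k:=\dim(T_pD)^G$, which equals the dimension of the component of $D^G$ through $p$; faithfulness and nontriviality force $k\le 3$, and the proof then splits into the cases $k=3$ and $k\le 2$.

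\emph{The case $k=3$.} Here $\r$ preserves the orthogonal splitting $T_pD=(T_pD)^G\oplus L$ with $L$ a line, is trivial on the first factor, and must act faithfully on $L$; thus $G\hookrightarrow O(1)=\{\pm1\}$, so $G=\Z/2$, generated by an involution $\s$ that acts by $-\id$ on $L$ and therefore reverses the orientation of $T_pD$, hence of $D$. By Smith theory $D^\s$ is $\Z/2$-acyclic, hence connected (so $D^G=D^\s$) and, by mod $2$ Poincar\'e--Lefschetz duality (which gives $w_1=0$), orientable; being a connected orientable hypersurface it is two-sided in $D$, and since $H_1(D;\Z/2)=0$ it separates. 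So $D\setminus D^\s$ has exactly two components, which $\s$ interchanges (it flips the two local sheets along $D^\s$), and $D$ is therefore the double of $Y:=D/\s$ along $D^\s\subset\d Y$. Running Mayer--Vietoris for $D=Y\cup_{D^\s}Y$ and using that $D$ is contractible forces both $Y$ and $D^\s$ to be $\Z$-acyclic; hence $D^G$ is a compact acyclic $3$-manifold, as claimed.

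\emph{The case $k\le 2$.} Every component of $D^G$ then has dimension $\le 2$: a $3$-dimensional component would, by the previous paragraph applied at one of its interior points, force $G=\Z/2$ and hence (by connectedness of $D^\s$) exhaust $D^G$, contradicting $k\le 2$. So $D^G$ is a compact manifold of dimension $\le 2$, and since the only connected compact such manifolds with vanishing reduced homology are $D^0,D^1,D^2$, it remains to show $D^G$ is $\Z$-acyclic. For $G$ a $p$-group this is Smith theory. In general I would induct on $|G|$: if $1\neq N\lhd G$ then $D^G=(D^N)^{G/N}$, and by induction $D^N$ is either a disk of dimension $\le 2$ — in which case $G/N$ acts on a disk of dimension $\le 3$, hence linearly by the uniformization argument recalled at the start of this appendix together with~\cite[Theorem~B]{KwaSch-S3-linear}, so that $D^G$ is a linear subdisk — or else $N=\Z/2$ and $D^N$ is a compact acyclic $3$-manifold, in which case one repeats the same argument for the $G/N$-action on $D^N$ (which, being $\Z$-acyclic, is $\Z/p$-acyclic for every $p$, so the Smith-theoretic input and the double-cover trick of the previous case both apply inside $D^N$). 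The only groups not reached by this induction are the nonabelian simple ones, and since every finite group acting effectively and smoothly on $D^4$ embeds into $O(4)$ via its faithful isotropy representation, $A_5$ is the only one, which is handled by hand.

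The step I expect to be the main obstacle is obtaining \emph{integral} (rather than merely mod $2$) acyclicity of $D^G$ in the codimension-one case: Smith theory by itself yields only $\Z/2$-coefficients, and a $\Z/2$-acyclic compact $3$-manifold need not be $\Z$-acyclic (for instance a punctured lens space $L(3,1)$), so the branched-double-cover description $D=Y\cup_{D^\s}Y$ and the Mayer--Vietoris argument it enables are doing the essential work. A secondary technical point is that Smith theory does not apply to $G$ itself when $|G|$ is not a prime power, which is why the case $k\le 2$ proceeds by descending through normal subgroups and using that, once a fixed set is an honest disk of dimension $\le 3$, every further induced action on it is linear.
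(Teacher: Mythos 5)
Your codimension-one case is correct and in fact sharper than what the paper records at the corresponding point: after noting $D^\s$ is $\Z/2$-acyclic, hence connected, orientable and two-sided, the decomposition of $D$ as the double of $Y=D/\s$ along $D^\s$ plus Mayer--Vietoris does yield \emph{integral} acyclicity of $D^\s$ (the paper only quotes Smith theory there). Note also that your route is genuinely different in architecture from the paper's: the paper doubles $D$ along $\d D$ to get a smooth homotopy $4$-sphere $S$, takes the orientation-preserving subgroup $H\le G$ (of index at most $2$), and invokes the theorem that the fixed point set of a smooth orientation-preserving finite group action on a homology $4$-sphere is empty or a sphere \cite[Theorem 2.1]{Dem}; that single input treats every finite $G$ uniformly, and the rest is Smith theory plus linearity of compact group actions on disks of dimension $\le 2$.

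The genuine gap is the case you defer: $G$ of composite, non-prime-power order, and above all the nonabelian simple case $G=A_5$, which you propose to handle ``by hand'' without any argument. This is not a routine verification; it is precisely where Smith theory gives nothing and where the lemma is delicate. By Oliver's theory (cf.\ \cite{Oli-MathZ} and the discussion of fixed-point-free $A_5$-actions in Section~\ref{sec: intro}), the fixed point set of a smooth $A_5$-action on a disk of sufficiently high dimension can be empty, disconnected, or a circle, so no local-linearization or normal-subgroup bookkeeping can by itself show that $D^{A_5}$ is connected, let alone a disk; what rules such behavior out in dimension $4$ is a genuinely $4$-dimensional theorem, which is exactly what the paper's appeal to \cite[Theorem 2.1]{Dem} (after doubling) supplies --- nonemptiness from \cite{BKS-fixed-pt-D4} does not help with connectedness or acyclicity. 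Your induction over normal subgroups does not close this hole: it terminates at the nonabelian simple case, and its second branch quietly requires a further unproved statement, namely that a finite group acting smoothly on a compact acyclic $3$-manifold with nonempty fixed set has acyclic fixed set; this is not ``the same argument,'' and the same $A_5$-difficulty recurs there (e.g.\ for $G\cong\Z/2\times A_5$ with $N$ the reflection-type involution, one is left with an $A_5$-action on the acyclic $3$-manifold $D^N$). Until these cases are actually proved, or replaced by the doubling argument together with \cite{Dem}, your proposal does not establish the lemma.
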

\begin{proof}
Any smooth finite group action on a $4$-disk has a fixed point~\cite[Theorem II.2]{BKS-fixed-pt-D4}.
Thus $D^G$ is a compact smooth properly embedded submanifold of $D$.
Let $S$ be the double of $D$ along $\d D$ equipped with the smooth $G$-action.
Thus $S$ is a smooth homotopy $4$-sphere and $S^G$ is nonempty.

The fixed point set of any smooth (or, more generally, locally linear) 
orientation-preserving action of a finite group on a homology $4$-sphere
is either empty or homeomorphic to a sphere~\cite[Theorem 2.1]{Dem}.
Let $H\le G$ be the subgroup of the orientation-preserving elements.
The group $G/H$ has order at most two,  because
$H$ is  the kernel of the $G$-action on $H_4(D, \d D;\Z)\cong \Z$.

Thus $S^H$ is a closed smooth submanifold of $S$ that is homeomorphic to
a sphere of dimension $l\in [0,4]$. 
Since $H$ preserves orientation, we have $l\neq 3$ else
$H$ would have to act nontrivially on a one-dimensional fiber of
the normal bundle to $D^G$.
%The doubling involution of $S$ restricts to a smooth involution of
%$S^H$ that swaps two copies of $D^H$ so that its fixed point set
%has codimension one, and hence is a homology $(l-1)$-sphere~\cite[Theorem IV.2.6]{Bre-book}.
If $l\le 2$, then $D^H$ is homeomorphic to the $l$-disk, the only manifold whose double is the $l$-sphere.
Since $D^G$ is the fixed point set of the $G/H$-action on $D^H$
we conclude for $l\le 2$ that $G/H$ acts linearly on $D^G$, and hence $D^G$ is 
a subdisk of $D^H$ as claimed. 

It remains to consider the case $l=4$ where $H$ is trivial and 
$G$ is generated by an orientation-reversing involution of $D$. 
Then $D^G$ is a homology disk~\cite[Theorem III.5.2]{Bre-book} of dimension $l\in [0,3]$. 
The $G$-action in a fiber of the normal bundle to $D^G$
only fixes the origin and reverses the orientation, hence $l$ is odd.
If $l=1$, then $D^G$ is a one-dimensional disk, as claimed, and we are left
with the exceptional case $l=3$ where $D^H$ is acyclic.
\end{proof}

\begin{rmk}
Any compact acyclic $3$-dimensional smooth submanifold of $S^3$ is homeomorphic to $D^3$
because its boundary is a homology $2$-sphere, and hence is homeomorphic to $S^2$,
so that the Schoenflies theorem applies~\cite{Bro-sch}. 
In particular, if $M$ and $G$ are as in Corollary~\textup{\ref{cor: 4-disk main}}
and $D$ is a $C^1$ embedded $4$-disk in $M$, then
$D^{G_D}$ is homeomorphic to a disk.
\end{rmk}

\begin{lem}\label{lem:4-disk2}
If $G$ is an infinite compact Lie group that
acts smoothly and effectively on the $4$-disk $D$, then $D^G$
is a disk of dimension $\le 2$. 
\end{lem}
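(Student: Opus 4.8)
The plan is to reduce everything to the circle subgroups that $G$ necessarily contains, and to the fact that the double of $D$ is a homotopy $4$-sphere on which Smith theory applies. Since $G$ is infinite and compact, its identity component is a positive‑dimensional compact connected Lie group, so $G$ contains a circle $S^1$ and, more generally, a maximal torus $T\le G$ of positive dimension. Let $\widehat D=D\cup_{\partial D}D$; a $G$‑invariant collar makes $\widehat D$ a smooth closed homotopy $4$‑sphere carrying a smooth $G$‑action. The $T$‑action on $\widehat D$ is effective with $\chi(\widehat D^{T})=\chi(\widehat D)=2\neq 0$, so $D^{T}\neq\emptyset$, and by Lemma~\ref{lem: fixed point set is C1} (applied to the $T$‑action on $D$) $D^{T}$ is a neat $C^1$‑submanifold meeting $\Int(D)$ and transverse to $\partial D$, so that $\widehat D^{T}$ is the double of $D^{T}$. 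Now I would invoke Smith theory for torus actions on a $\mathbb{Q}$‑cohomology $n$‑sphere (\cite[Ch.~III and~VII]{Bre-book}), iterated over the circle factors of $T$: $\widehat D^{T}$ is a $\mathbb{Q}$‑cohomology $r$‑sphere with $4-r$ even, the Euler characteristic $2$ rules out $r=-1$, and effectiveness on the connected manifold $D$ rules out $r=4$. Thus $\widehat D^{T}$ is two points or a $\mathbb{Q}$‑homology $2$‑sphere; among closed surfaces the latter must be $S^{2}$, and since $\widehat D^{T}$ is a double the only possibilities are $D^{T}=D^{0}$ or $D^{T}=D^{2}$. The same argument applied to $S^1$ shows $D^{S^1}$ is a disk of dimension $\le 2$, whence $\dim D^{G}\le\dim D^{S^1}\le 2$ because $D^{G}\subseteq D^{S^1}$.

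It remains to prove that $D^{G}$ is itself a disk. First I would reduce to $G$ connected: $G/G_0$ is finite and $D^{G}=(D^{G_0})^{G/G_0}$, so once $D^{G_0}$ is a disk of dimension $\le 2$, the linearizability of smooth finite group actions on disks of dimension $\le 2$ and on the interval (the uniformization argument at the start of this appendix) makes $D^{G}$ a subdisk of $D^{G_0}$. Assume now $G$ connected. If $G$ has a positive‑dimensional center, pick a central circle $S^1\le Z(G)$; then $D^{S^1}$ is $G$‑invariant, the connected compact group $G/S^1$ of strictly smaller dimension acts smoothly on the disk $D^{S^1}$ of dimension $\le 2$, and $D^{G}=(D^{S^1})^{G/S^1}$ is a subdisk by uniformization again, so we finish by induction on $\dim G$. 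If $G$ is semisimple I would write $D^{G}=(D^{T})^{W}$ with $W=N_G(T)/T$ the finite Weyl group acting smoothly on the disk $D^{T}$, so that $(D^{T})^{W}$ is a subdisk, provided one establishes the nonobvious inclusion $D^{N_G(T)}\subseteq D^{G}$, i.e.\ that a point of $D$ whose isotropy contains $N_G(T)$ is fixed by all of $G$.

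This last inclusion is the crux, and it genuinely uses that $D$ is a disk: a priori $G$ could act with an exceptional orbit $G/H$ for some $N_G(T)\le H\subsetneq G$, or with no fixed point at all. I expect to rule this out on $\widehat D$: an equivariant tubular neighborhood of such an orbit together with a Mayer--Vietoris computation would push a nonzero class into $H_{1}$ or $H_{2}$ of the homology $4$‑sphere $\widehat D$ --- for instance a $2$‑dimensional orbit $SU(2)/S^{1}\cong S^{2}$ or $SU(2)/N(S^{1})\cong\mathbb{RP}^{2}$ contributes a $\mathbb{Z}$ to $H_{2}$ or a $\mathbb{Z}/2$ to $H_{1}$ that survives because the neighborhood is glued along a lens space along which the relevant homology maps to zero --- contradicting that $\widehat D$ is a homotopy sphere; the degenerate alternative, in which the normal representation of $D^{G}$ were trivial, would instead violate $\dim D^{S^1}\le 2$. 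In parallel one checks that a connected simple compact Lie group of dimension $\ge 4$ has no nontrivial real representation of dimension $\le 4$, so it cannot act effectively on $D$ fixing a point with a nontrivial normal slice; combined with the fixed‑point statement this leaves only the groups built from $SU(2)$ and $SO(3)$, handled as above. Making this orbit analysis uniform --- equivalently, proving that a smooth effective compact Lie group action on the $4$‑disk has a fixed point and admits no $2$‑dimensional exceptional orbit --- is the main obstacle; the rest is the formal double‑and‑Smith argument plus the low‑dimensional linearization results already recorded in this appendix.
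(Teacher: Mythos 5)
Your reduction steps are fine and essentially parallel the paper: doubling $D$ and applying Smith theory to a (central or maximal) torus to get that $D^T$ is a point or a $2$-disk, then handling $G/G_0$ (and, in the positive-dimensional-center case, $G/S^1$) by the linearization of compact group actions on disks of dimension $\le 2$. The genuine gap is exactly where you flag it: the semisimple case. The inclusion $D^{N_G(T)}\subseteq D^G$ (equivalently, ruling out exceptional orbits and, more basically, producing a fixed point at all for a connected semisimple group acting on $D^4$) is not a formal statement, and your Mayer--Vietoris sketch for it is not carried out --- it is a proposal to classify the possible singular/exceptional orbits case by case, which is precisely the substantive content of the lemma. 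Note that such a statement must use the dimension $4$ in an essential way: by Oliver's results quoted elsewhere in this paper~\cite{Oli-MathZ}, every connected nonabelian compact Lie group (e.g.\ $SU(2)$) admits smooth actions on high-dimensional disks with empty fixed point set, so no soft ``double-and-Smith'' or representation-theoretic argument alone can yield $D^G\neq\emptyset$ here.

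For comparison, the paper closes this case differently: for semisimple $G$ it shows the principal orbit $G/H$ cannot be a point (else $G$ would fix an open set) and cannot be a circle (since $\pi_1(G/H)$ is squeezed between the finite groups $\pi_1(G)$ and $\pi_0(H)$), so the principal orbit has codimension $1$ or $2$ in $D$; it then invokes Bredon's classification of compact Lie group actions with principal orbits of codimension $\le 2$ on acyclic manifolds \cite[Theorems IV.8.1 and IV.8.5]{Bre-book} to conclude that the action on $\Int(D)$ is equivalent to an orthogonal action on $\R^4$, whose fixed point set is a linear subspace of dimension $\le 2$; transversality to $\partial D$ then gives that $D^G$ is a disk. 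To complete your argument you would either need to import such a classification theorem or genuinely carry out the orbit-type analysis (fixed point existence plus exclusion of $2$-dimensional exceptional orbits) that you defer; as written, the proposal does not prove the statement.
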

\begin{proof}
If the claim is true for the connected component $G_0$ of $G$, then
it is true for $G$ because if $G_0$ fixes a disk $F$ of dimension $\le 2$,
then 
%$G$ cannot fix a disk of larger dimension, and also
$G/G_0$ acts on $F$. Since any compact Lie group action on a disk of dimension $\le 2$
is equivalent to a linear action, $G/G_0$ fixes a subdisk of $F$ of
dimension $\le 2$, which is then fixed by $G$.

Thus we may assume that $G$ is connected. 
By the structure theory of compact Lie groups
there is a surjective homomorphism $T\times \bar G\to G$ with finite kernel
where $T$ is a torus and $\bar G$ is a semisimple connected compact Lie group. 
The resulting (possibly ineffective) $T$-action on $D^4$ has a 
 fixed point set $D^T$ which is an integral homology disk of dimension $0$, $2$, or $4$,
see~\cite[Theorem III.10.3]{Bre-book}, and in particular it is non-empty.  
If $T$ is nontrivial, then $D^T$ is a nowhere dense~\cite[Theorem III.9.5]{Bre-book} 
smooth submanifold of $D$, and so $\dim(D^T)<4$. 
Thus $D^T$ is diffeomorphic to a disk of dimension $\le 2$. Since $\bar G$ normalizes $T$, it 
acts on $D^T$, and the argument of the previous paragraph applies
to show that $G$ fixes a disk of dimension $\le 2$. 

Thus we may assume that $G$ is semisimple. The principal orbit $G/H$
cannot be a point because then $G$ fixes an open subset of $D$~\cite[Theorem IV.3.1]{Bre-book},
and cannot be a circle as, e.g., in the homotopy sequence
of the bundle $G\to G/H$ the group $\pi_1(G/H)$ sits between finite groups $\pi_1(G)$,
$\pi_0(H)$. Also $G/H$ is a closed manifold embedded in $D^4$, so its dimension is at most $3$.
Thus $G/H$ has codimension $1$ or $2$. Then~\cite[Theorem IV.8.1 and Theorem IV.8.5]{Bre-book}
imply that the $G$-action on $\Int(D)$ is equivalent to an orthogonal action $\R^4$,
so its fixed point set is a linear subspace $V$, and in particular is nonempty. 
Since $D^G$ is a smooth submanifold that is transverse to $\d D$,
we get $\Int(D^G)=V$. Thus
\[
0\le \dim(D^G)=\dim(V)\le\dim(\Int(D)/G)=4-\dim(G/H)\le 2
\]
and therefore $D^G$ is a disk of dimension $\le 2$.
\end{proof}

\begin{lem}
\label{lem: acyclic 4d}
There is a smooth involution on $D^4$ whose fixed point set
is an acyclic non-simply connected $3$-manifold with boundary
that is properly embedded in $D^4$. 
\end{lem}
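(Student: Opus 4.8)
\emph{Proof plan.} The plan is to produce $\tau$ by a doubling construction based on a contractible $4$-manifold that is not a ball. Let $W$ be the Mazur manifold: a compact contractible smooth $4$-manifold whose boundary $\Sigma:=\partial W$ is a homology $3$-sphere with non-trivial fundamental group, but which nevertheless satisfies $W\times[0,1]\cong D^5$. Rounding the corners of $W\times[0,1]$ along $\partial W\times\{0,1\}$, its boundary is the standard smooth sphere $S^4=\partial(W\times[0,1])$. First I would observe that the reflection $(w,s)\mapsto(w,1-s)$ of $W\times[0,1]$ is an orientation-reversing smooth involution, symmetric about $s=\tfrac12$ and hence compatible with a symmetric corner-rounding; it therefore descends to an orientation-reversing smooth involution $\tau$ of $S^4$ whose fixed-point set is $\partial W\times\{\tfrac12\}\cong\Sigma$, a closed $3$-manifold.

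Next I would cut $\tau$ down to a disk. Fix a $\tau$-invariant Riemannian metric on $S^4$, so that $\Sigma=\Fix(\tau)$ is totally geodesic. Choose $p\in\Sigma$; for small $\e$ the metric $\e$-ball $U$ about $p$ is $\tau$-invariant (as $p$ is a fixed point) and $U\cap\Sigma$ is the metric $\e$-ball of $(\Sigma,g|_{\Sigma})$ about $p$, so for $\e$ below the relevant injectivity radii $U$ and $U\cap\Sigma$ are smoothly standard balls $D^4$ and $D^3$. By the disk theorem (uniqueness of smooth embedded balls up to ambient isotopy) $D^4:=\overline{S^4\setminus U}$ is diffeomorphic to the standard $4$-disk, and $\tau$ restricts to a smooth involution of it with
\[
\Fix(\tau|_{D^4})=\Sigma\cap\overline{S^4\setminus U}=\overline{\Sigma\setminus(U\cap\Sigma)}=:W',
\]
a compact $3$-manifold with $\partial W'=S^2$ that meets $\partial D^4=\partial U$ transversally, hence is properly embedded in $D^4$.

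It then remains to identify the homotopy type of $W'$. Since $\Sigma=W'\cup_{S^2}B^3$, van Kampen gives $\pi_1(W')\cong\pi_1(\Sigma)\neq\{1\}$, so $W'$ is not simply connected. For acyclicity, excision identifies $H_*(W',S^2)$ with $H_*(\Sigma,B^3)\cong\widetilde H_*(\Sigma)$, which is $\Z$ in degree $3$ and $0$ otherwise; feeding this into the long exact sequence of the pair $(W',S^2)$, and using that $H_3(W')=0$ (a compact $3$-manifold with non-empty boundary has the homotopy type of a $2$-complex) and that the connecting homomorphism $H_3(W',\partial W')\to H_2(\partial W')$ carries the fundamental class of $W'$ to $[\partial W']$, one obtains $\widetilde H_*(W')=0$.

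I expect the only genuine ingredient to be the classical fact invoked at the start --- the existence of the Mazur manifold together with $W\times[0,1]\cong D^5$, equivalently the fact that the double of this contractible $4$-manifold is the standard $S^4$; the corner-rounding, the disk theorem, and the homological bookkeeping are routine. As a consistency check, $\tau$ is forced to be orientation-reversing and $W'$ is precisely the kind of acyclic, non-contractible $3$-dimensional fixed-point set permitted by the exceptional case of Lemma~\ref{lem:4-disk1}.
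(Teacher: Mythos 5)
Your proof is correct and follows essentially the same route as the paper: both take a Mazur-type contractible $4$-manifold with non-simply connected homology-sphere boundary, realize $S^4$ as its double with the doubling involution (you obtain this as $\partial(W\times[0,1])$ via $W\times[0,1]\cong D^5$, which is equivalent), remove a small invariant ball centered at a point of the fixed $3$-manifold, and identify the fixed set of the resulting involution on $D^4$ with the punctured homology sphere. The only difference is that you spell out the routine homological bookkeeping that the paper leaves implicit.
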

\begin{proof}
In~\cite{Maz-contract-4d, Poe-contract-4d} one finds
a smooth compact contractible $4$-manifold $C$
such that $\d C$ is not simply-connected while the double $DC$
of $C$ along $\d C$ is diffeomorphic to $S^4$. Let $\i$
be the doubling involution. Removing a small $\i$-invariant
open ball $B$ centered at $\d C$ yields a $\i$-invariant copy of $D^4$ 
that is the double of $C\setminus B$ along $\d C\setminus B$.
Here $\i$ permutes two copies of $C\setminus B$ and fixes
$\d C\setminus B$ pointwise. Finally, 
since $\d C$ is a non-simply connected homology sphere, $\d C\setminus B$ is a compact
acyclic non-simply connected $3$-manifold.
\end{proof}

Here is an example of a $5$-disk $D$ in $\R^5$ 
such that $G_D$ is generated by a
linear involution and $D^{G_D}$ is not contractible.

\begin{lem}
\label{lem: acyclic 5d}
If $r$ is the reflection in the equator of $S^5$ and $k\in\{4,5\}$, then
$S^5$ contains a smoothly embedded $r$-invariant copy of $D^k$
which transversely intersects the equator in an acyclic non-simply connected 
$(k-1)$-manifold with boundary.
\end{lem}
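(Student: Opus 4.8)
The plan is to build $(S^5,r)$ and the required disk together out of the contractible $4$-manifold produced in the proof of Lemma~\ref{lem: acyclic 4d}. Thus let $C$ be a smooth compact contractible $4$-manifold with $\partial C$ a non-simply-connected homology $3$-sphere and double $DC$ diffeomorphic to $S^4$, let $B\subset DC$ be the small invariant open ball centered at a point of $\partial C$, and put $P:=\partial C\setminus B$, the compact acyclic non-simply-connected $3$-manifold with $\partial P=S^2$ appearing there; recall that the double $\Delta:=(C\setminus B)\cup_P(C\setminus B)$ is the copy of $D^4$ of that proof, carrying the involution $\iota$ that interchanges the two halves and fixes $P$.

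First I would assemble the ambient sphere. The $5$-manifold $N:=C\times D^1$, with corners smoothed, is compact contractible with $\partial N$ the double $DC\cong S^4$; capping $\partial N$ off with a $5$-disk produces a simply connected integral homology $5$-sphere, hence a homotopy $5$-sphere, hence a smooth $S^5$ (the group of smooth homotopy $5$-spheres being trivial), so $N\cong D^5$. Set $S^5:=N\cup_{\partial N}N$ and let $r$ interchange the two copies of $N$; since $N\cong D^5$ and the copies are glued along $\partial N$ by the identity, $(S^5,r)$ is the standard sphere with its equatorial reflection, the equator being $\partial N=\Fix(r)$. Finally fix a metric on $N$ that is a product in a collar of $\partial N$, so that $S^5$ carries the doubled smooth structure and any submanifold of $N$ that meets $\partial N$ orthogonally along a subset of $\partial N$ glues to its $r$-image into a $C^\infty$ submanifold of $S^5$.

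Next I would produce the disk. For $k\in\{4,5\}$ put $\Delta_k:=(C\setminus B)\times[-\tfrac12,\tfrac12]^{\,k-4}$ inside the first copy $N=C\times D^1$ — so $\Delta_4=(C\setminus B)\times\{0\}$ and $\Delta_5=(C\setminus B)\times[-\tfrac12,\tfrac12]$, the latter a codimension-zero piece of $N$ — and let $D^k:=\Delta_k\cup r(\Delta_k)\subseteq S^5$. Since $(C\setminus B)\cap\partial C=P$ and $[-\tfrac12,\tfrac12]\cap\{\pm1\}=\emptyset$, one has $\Delta_k\cap\partial N=P\times[-\tfrac12,\tfrac12]^{\,k-4}$, which lies in the interior of the $\partial C\times D^1$ face of $\partial N$ and hence in the product collar; arranging $\Delta_k$ to meet $\partial N$ orthogonally there makes $D^k$ a smooth $r$-invariant submanifold of $S^5$ transverse to the equator. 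Identifying the two copies of $N$ via $r$ exhibits $D^k$ as the double of $\Delta_k$ along $P\times[-\tfrac12,\tfrac12]^{\,k-4}$, which equals $\big((C\setminus B)\cup_P(C\setminus B)\big)\times[-\tfrac12,\tfrac12]^{\,k-4}=\Delta\times[-\tfrac12,\tfrac12]^{\,k-4}\cong D^4\times D^{k-4}\cong D^k$, using Lemma~\ref{lem: acyclic 4d} for $\Delta\cong D^4$. Moreover $D^k\cap(\text{equator})=D^k\cap\Fix(r)=\Fix\!\big(r|_{D^k}\big)=P\times[-\tfrac12,\tfrac12]^{\,k-4}$, which deformation retracts to $P$ and is therefore a compact acyclic non-simply-connected $(k-1)$-manifold with boundary (properly embedded in $D^k$ and transverse to $\partial D^k$, by Lemma~\ref{lem: fixed point set is C1}).

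The step I expect to demand the most care is the word \emph{orthogonally}. Using the product collar and the local models for $\Delta_k$ near $P\times[-\tfrac12,\tfrac12]^{\,k-4}$ — a half-space near interior points of $P$, and the complement of a round ball near the corner locus over $\partial P=S^2$, each crossed with a cube — one must check that $\Delta_k$ really does meet $\partial N$ perpendicularly along all of that set, so that reflecting and gluing yields a genuinely $C^\infty$ submanifold, and that the resulting intersection with the equator is transverse rather than merely of the expected dimension. The two ancillary claims, that $N\cong D^5$ and that $(S^5,r)$ is standard, are routine once the homotopy-$5$-sphere computation is made and the gluing recognized as trivial.
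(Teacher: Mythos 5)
Your construction is correct, but it reaches the configuration by a genuinely different route than the paper. The paper stays inside the standard $(S^5,r)$ throughout: it properly embeds the contractible $4$-manifold $C$ of Lemma~\ref{lem: acyclic 4d} into the hemisphere $D^5$ with $\partial C$ in the equator (via the track-of-an-isotopy trick), doubles across the equator to get an $r$-invariant copy of $DC$, removes the invariant ball $B$ to settle $k=4$, and then handles $k=5$ by thickening inside an $r$-invariant tubular neighborhood $DC\times I$ and smoothing corners. You instead build the ambient pair abstractly as the double of $N=C\times D^1$ with the swap involution, and then must recognize it as the standard pair; this forces you to prove $N\cong D^5$ (your capping argument gives a homotopy $5$-sphere and $\Theta_5=0$ gives $S^5$, but the final step ``hence $N\cong D^5$'' also uses uniqueness of codimension-zero ball embeddings, i.e.\ the Palais--Cerf disk theorem, to identify the complement of the glued $D^5$ with a hemisphere) and to check, via an equivariant collar argument, that the double of a $5$-disk with its swap involution is the standard equatorial reflection. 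So your approach trades the paper's embedding-theoretic steps (proper embedding of $C$ into $D^5$, invariant tubular neighborhood) for recognition theorems ($\Theta_5=0$, the disk theorem, standardness of the doubled involution); what it buys is a uniform treatment of $k=4,5$ via $\Delta_k=(C\setminus B)\times[-\tfrac12,\tfrac12]^{k-4}$ and a completely explicit equatorial trace. Your identification of that trace as $P\times[-\tfrac12,\tfrac12]^{k-4}$ with $P=\partial C\setminus B$ is the correct one (and, for $k=5$, matches what the paper's construction actually produces: the paper's phrase ``intersects the equator along $(C\setminus B)\times I$'' is evidently a slip for $(\partial C\setminus B)\times I$). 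Your flagged orthogonality/corner issues (meeting $\partial N$ perpendicularly along $P\times[-\tfrac12,\tfrac12]^{k-4}$, the corner locus over $\partial P\cong S^2$, and the equivariant corner smoothing for $k=5$) are real but routine, and are handled at the same implicit level in the paper's own proof.
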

\begin{proof}
In the notations of Lemma~\ref{lem: acyclic 4d}
consider a proper embedding of $C$ into $D^5$ (e.g., obtained
by isotoping $C\subset S^4=\d D^5$ into the interior of $D^5$ and concatenating
the result with the track of isotopy over $\d C$). Removing from $C$ a small
$r$-invariant open $4$-disk $B$ centered at a point of $\d C\subset\d D^5$ and 
then doubling along $\d D^5$ gives an $r$-invariant copy of $DC\setminus B$ of 
Lemma~\ref{lem: acyclic 4d} in $S^5$ that intersects the equator
along $\d C\setminus B$. This covers the case $k=4$. 
For the case $k=5$ set $I=[-1,1]$ and note that 
$DC\times I$ embeds as
an $r$-invariant tubular neighborhood of $DC$
in $S^5$, and after smoothing corners $(DC\setminus B)\times I$ is 
an $r$-invariant $5$-disk in $S^5$ that intersects the equator 
along $(C\setminus B)\times I$.
\end{proof}

\section*{Acknowledgments}
M.G. first learned about the problem of finding a center
for planar Jordan domains from Eugenio Calabi in 1995. Also we thank John Etnyre, Ralph Howard,
Dan Margalit, and Krzysztof Pawalowski for helpful discussions.

\bibliographystyle{amsalpha}
\bibliography{cr}
\end{document}